\newtheorem{theorem}{Theorem}
\newtheorem{lemma}[theorem]{Lemma}
\newtheorem{definition}[theorem]{Definition}
\newtheorem{remark}[theorem]{Remark}
\DeclareMathOperator*{\argmin}{arg\,min}
\tikzset{
    position/.style args={#1:#2 from #3}{
        at=(#3), anchor=#1+180, shift=(#1:#2)
    }
}
\definecolor{myLightGray}{RGB}{191,191,191}
\definecolor{myGray}{RGB}{160,160,160}
\definecolor{myDarkGray}{RGB}{144,144,144}
\definecolor{myDarkRed}{RGB}{167,114,115}
\definecolor{myRed}{RGB}{250,00,00}
\definecolor{myGreen}{RGB}{00,250,00}
\newcounter{NoTableEntry}
\renewcommand*{\theNoTableEntry}{NTE-\the\value{NoTableEntry}}
\newcommand{\code}[1]{\texttt{#1}}
\DeclareMathOperator*{\spa}{span}
\newcommand*\diff{\mathop{}\!\mathrm{d}}
\newcommand*{\notableentry}{%
  \kern-\tabcolsep
  \stepcounter{NoTableEntry}%
  \vadjust pre{\zsavepos{\theNoTableEntry t}}
  \vadjust{\zsavepos{\theNoTableEntry b}}
  \zsavepos{\theNoTableEntry l}
  \raisebox{%
    \dimexpr\zposy{\theNoTableEntry b}sp
    -\zposy{\theNoTableEntry l}sp\relax
  }[0pt][0pt]{%
    \color{black}%
    \setlength{\unitlength}{1pt}%
    \edef\w{%
      \strip@pt\dimexpr\zposx{\theNoTableEntry r}sp%
      -\zposx{\theNoTableEntry l}sp\relax
    }%
    \edef\h{%
      \strip@pt\dimexpr\zposy{\theNoTableEntry t}sp%
      -\zposy{\theNoTableEntry b}sp\relax
    }%
    \ifdim\w pt=0pt 
    \else
      \begin{picture}(0,0)%
        \edef\x{%
          \noexpand\put(0,0){\noexpand\line(\w,\h){\w}}%
          \noexpand\put(0,\h){\noexpand\line(\w,-\h){\w}}%
        }\x
      \end{picture}%
    \fi
  }%
  \hspace{0pt plus 1filll}%
  \zsavepos{\theNoTableEntry r}
  \kern-\tabcolsep
}
\title{Approximating optimal feedback controllers of finite horizon control problems using hierarchical tensor formats}
\renewcommand\@date{{%
  \vspace{-\baselineskip}%
  \large\centering
  \begin{tabular}{@{}c@{}}
    Mathias Oster\textsuperscript{1} \\
    \normalsize \texttt{oster@math.tu-berlin.de} 
  \end{tabular}%
  \quad 
  \begin{tabular}{@{}c@{}}
    Leon Sallandt\textsuperscript{1} \\
    \normalsize \texttt{sallandt@math.tu-berlin.de} 
  \end{tabular}
  \quad 
  \begin{tabular}{@{}c@{}}
    Reinhold Schneider\textsuperscript{1} \\
    \normalsize \texttt{schneidr@math.tu-berlin.de} 
  \end{tabular}

  \bigskip

  \textsuperscript{1}Technische Universit\"at Berlin

  \bigskip

  \today
}}
\begin{document}

\maketitle

\begin{abstract}
Controlling systems of ordinary differential equations (ODEs) is ubiquitous in science and engineering.
For finding an optimal feedback controller, the value function and associated fundamental equations such as the Bellman equation and the Hamilton-Jacobi-Bellman (HJB) equation are essential. 
The numerical treatment of these equations poses formidable challenges due to their non-linearity and their (possibly) high-dimensionality.

In this paper we consider a finite horizon control system with associated Bellman equation.
After a time-discretization, we obtain a sequence of short time horizon problems which we call local optimal control problems.
For solving the local optimal control problems we apply two different methods, one being the well-known policy iteration, where a fixed-point iteration is required for every time step.
The other algorithm borrows ideas from Model Predictive Control (MPC), by solving the local optimal control problem via open-loop control methods on a short time horizon, allowing us to replace the fixed-point iteration by an adjoint method.

For high-dimensional systems we apply low rank hierarchical tensor product approximation/tree-based tensor formats, in particular  tensor trains (TT tensors) and 
multi-polynomials, together with high-dimensional quadrature, e.g. Monte-Carlo.

We prove a linear error propagation with respect to the time discretization and give numerical evidence by controlling a diffusion equation with unstable reaction term and an Allen-Kahn equation.
\end{abstract}

\section{Introduction}

We consider a finite horizon optimal control problem on a $d$-dimensional state space. In our applications the constraining dynamical systems are spatial discretizations of partial differential equations (PDE) and thus $d$ can become large. Opposed to open-loop controls, feedback laws provide controls for all initial values simultaneously and in many cases produce stabilizing controls even under perturbations. 
For finding an optimal feedback law, a mapping from the state to the optimal control is needed.
In order to allow online usage of the feedback law fast evaluation of this map is critical.

One popular approach for finding an optimal feedback law is to calculate the value function $v^*:\mathbb R^d\times [0,T]\to\mathbb R$, which is a mapping from the state space to the real numbers.
This function obeys a fundamental equation, the Bellman equation \cite{bellman1957dynamic, DeterministicHJB}.
For given time-points the Bellman equation is solved recursively by a sequence of optimal control problems with short time horizon $\tau \ll T$, where $\tau$ is the step size in a time discretization of the value function.

We treat these time-local optimal control problems using two different methods, one being an open-loop/adjoint approach and the other being the policy iteration algorithm \cite{howard1960dynamic}.

The open-loop ansatz is motivated by the observation that the value function can be computed point-wise using traditional methods from optimal control such as open-loop control/adjoint methods \cite{Pontryagin}.

In order to obtain such point-evaluations of the value function, the optimal control problem has to be solved for an initial value.
The dimension of this optimal control problem scales with the time-horizon and for every gradient step the ODE has to be solved in a forward and a backward way.
In that sense, computing an optimal control and thus the value function can become expensive for long time horizons.
However, due to short time horizons $\tau$ of the local optimal control problems the open-loop approach is efficient for our approach.

Using these point measurements we perform an interpolation/regression to obtain the value function for every initial state.
This yields a backwards iteration w.r.t. time.
Note that in our numerical tests computing the local open-loop control problems is the numerical bottleneck and this step can be parallelized perfectly.

In the second approach we use the policy iteration algorithm combined with a regression to solve the local optimal control problems.
This yields a similar backwards iteration with a nested fixed-point equation.
In this approach several regression problems have to be solved for every time-point.
However, generating samples is less expensive, shifting the numerical bottleneck from generating the samples to solving the equations.

%

As the dimension of the state space increases, traditional methods to represent the function, like Galerkin approximation by splines, finite elements or multi polynomials suffer from the so-called curse of dimensionality, which means that their complexity increases exponentially with the dimension of the state space.
To alleviate this problem we use a non-linear model class, namely low rank hierarchical tensor product approximation/tree-based tensor formats, in particular  tensor trains (TT tensors) and multi-polynomials, together with high dimensional Monte-Carlo quadrature.
By this approach we compute a low-fidelity/complexity representation of the required optimal feedback control such that it is easily applied in online computations.
We prove a linear error propagation with respect to the time discretization in Theorem \ref{thm:error_propagation}. 

Finally, we provide numerical evidence for both approaches by testing a feedback law computed for a diffusion equation with unstable reaction term as well as a Allen-Kahn equation.
Moreover, we propose possible extensions of the methods where more information about the system is used.
\subsubsection*{Previous Work}
The Bellman equation, also known as dynamic programming, was introduced in the 1950s by R. Bellman \cite{bellman1966dynamic}.
Note that in addition to the Bellman equation, there exists an infinitesimal version, the Hamilton-Jacobi-Bellman (HJB) equation, see e.g. \cite{DeterministicHJB, falcone2013semi}.
Both equations can be approximated by the policy iteration \cite{howard1960dynamic}, which is a widely used tool in optimal feedback control, see e.g. \cite{pol_approx_kunisch, alla2015efficient} Other popular methods for solving the HJB equation are semi-Lagrangian methods \cite{SemiLagranigian,SemiLagrangianStochastic,Falcone1987}, Domain splitting algorithms \cite{FALCONESplitting}, variational iterative methods \cite{VIM}, data based methods with Neural Networks \cite{DataHJB}, actor-critic methods \cite{zhou2021actor}, tree-based methods \cite{ALLA2020192} and tropical algorithms \cite{maxplus_det,maxplus_stoch}.

Simultaneously to the work of Bellman, Pontryagin developed a set of necessary conditions for optimal problems \cite{Pontryagin}, the so-called Pontryagin maxiumum principle (PMP)
This approach naturally leads to adjoint methods, from where feedback control is not immediately applicable.

Despite its open-loop nature, the PMP has already been used to find feedback controls, see e.g. \cite{beeler2000feedback, kang2017mitigating, nakamura2019adaptive, azmi2020optimal}.
However, in contrast to our open-loop approach, these approaches do not use the Bellman equation and thus the PMP approach can suffer from long time-horizons.

Another approach, closely related to our open-loop ansatz, where the concept of adjoint methods and feedback control are combined is Model Predictive Control (MPC).
Model predictive control originates for simple applications in the 1970s for stabilizing linear systems, see i.e. \cite{kleinman1970easy}, and later reformulated for non-linear systems, see e.g. \cite{grune2017nonlinear}.
For surveys on MPC w.r.t. theoretical results and industrial applications we refer to \cite{garcia1989model, qin2003survey}.
The MPC approach replaces the optimal control problem with short and overlapping time horizon and obtains a feedback control by solving this short time horizon problem, to obtain a sub-optimal online controller.
However, due to its short time horizon, this optimal control problem can be solved in real-time.
Our open-loop method can be characterized as a MPC approach with optimal final condition, leading to an optimal controller.
This combination of MPC and the Bellman equation has already been considered in the context of reinforcement learning \cite{atkeson1994using,zhong2013value}.
In these applications, low-dimensional systems are considered.

The second approach we consider in this paper is an application of the policy iteration algorithm.

In order to control higher-dimensional systems, memory efficient methods have to be applied to circumvent the curse of dimensionality.
Hierarchical tensor products such as tensor trains have also been recently used in this context.
Rooted in quantum physics under the name \emph{matrix product states}, tensor trains have been introduced to the mathematical community in \cite{oseledets2011tensor} to tackle the curse of dimensionality.
Note that tensor trains are a special case of a more general framework - hierarchical tensor networks.
These networks have been developed in \cite{Hackbusch-2010} where a well-founded mathematical framework is introduced.
For surveys and more details, see \cite{Hackbusch-Acta, Hackbusch2014,Legeza-Schneider,Bachmayr-Uschmajew-Schneider}.
Tensor trains have already been applied to represent the value function of infinite horizon control problems \cite{dolgov2019tensor,oster2019approximating} 
and stochastic control problems \cite{horowitz2014linear, fackeldey2020approximative, gorodetsky2018high}.

Instead of this ansatz space it is possible to use other methods from machine learning such as neural networks, see e.g.,  \cite{darbon2020overcoming, nusken2020solving,ito2020neural}, kernel methods or sparse polynomials, as it is done in \cite{azmi2020optimal}.

Finally, our regression/Least-Squares-based approach is closely related with empirical risk minimization \cite{vapnik1992principles, steinwart} and classical machine learning tasks with one fundamental twist.
In classical tasks from statistical and machine learning the data is noisy and samples are prescribed and biased.
In our approach, we are able to compute the data with close to arbitrary precision due to the adjoint method approach and advanced ODE solvers.
Moreover, we are able to generate or data points arbitrarily.
Due to the precision of the adjoint approach, computing the data can be understood as a so-called high-fidelity computation.
In contrast to that, our approximation of the value function can be seen as a low-fidelity representation of the value function, enabling real-time evaluations of the gradient and thus of the (approximative) optimal control.

While the generalization to controlling stochastic systems is straight-forward in the case of the policy iteration, the generalization of the open-loop approach to stochastic control is more complex.
Here, possible approaches are the solution of forward-backward stochastic differential equations (FBSDE) \cite{bouchard2004discrete, gobet2005regression}, for an optimal control context see e.g. \cite{pham2005some}.
We want to highlight recent groundbreaking successes in the treatment of FBSDEs using deep learning, see e.g. \cite{weinan2017deep}.
It was also shown that tensor trains are efficiently applicable in this context \cite{richter2021solving}.
\subsubsection*{Structure}
The rest of the paper is organized as follows.
In the preliminaries Section $2$ we introduce the optimal control problem, and fundamental concepts such as the value function and the Bellman equation.

Section $3$ is devoted to reinterpreting the Bellman equation as an optimal control problem and the corresponding open-loop ansatz.
In the following sections we treat the Bellman equation via the policy iteration algorithm.
Then, we clarify the numerical feasibility by introducing the regression method in combination with our ansatz set, the tensor trains.
In Section \ref{sect:compare} we give several ideas on how to improve the approach and
in the final section we give numerical evidence.

\section{The Optimal Control Problem}
In this section we formulate the optimal control problem in open-loop and closed-loop form. 
Furthermore, we define basic notions of control theory and formulate governing equations.

We consider a deterministic, finite time horizon optimal control problem of the following form.
For $x \in \Omega \subset \mathbb R^d$ minimize w.r.t. $u \in L^2(0, T; \mathbb R^m)$ the cost functional $\mathcal J: [0, T] \times \Omega \times L^2(0, T; \mathbb R^m) \to \mathbb R$, defined as,
\begin{equation}\label{eq:cost}
   \mathcal J(t_0, x, u(\cdot)) := \int_{t_0}^T c(t, y(t)) + u(t)' R(t) u(t) \diff t + c_T(y(T)),  
\end{equation} 
where
\begin{align}
    \dot y(t) &= f(t, y(t)) + g(t, y(t)) u(t) \label{eq:rhs} \\
    y(0) &= x.
\end{align}
We assume $c:[0,T] \times \Omega \to \mathbb{R}_{\geq 0}$ and $c_T:\Omega \to \mathbb R_{\geq 0}$ are non-negative, coercive and smooth.
Further, let $R:[0,T] \to \mathbb R^{m,m}$ be a family of positive definite matrices continuous in time.
Note that $u(t)'$ denotes the transpose of $u(t)$. 
For initial data $x \in \Omega$ and fixed control $u(\cdot) \in \mathcal A$, we denote by $y^x(t, u) \in \Omega$ the evaluation of the trajectory at time $t$. If the context is clear we just write $y(t)$. 
We further assume that $f: [0, T] \times \Omega \to \Omega$ and $g: [0,T] \times \Omega \to \mathbb R^{n,m}$ are (non linear) smooth functions. 
We define the value function as
\begin{equation}
	v^*: [0,T] \times \Omega \to \mathbb R, \quad (t, x) \mapsto \inf_{u \in L^2(0,T;\mathbb R^m)} \mathcal J(t, x, u)
\end{equation}
and the corresponding Bellman equation takes the following form.
\begin{theorem}\cite{DeterministicHJB, Bardi1997}
Set $\ell(t, x, u) = c(t, x) + u' R(t) u$.
Then for all $x \in \Omega$ and $0 \leq t_0 < t_1 \leq T$ we have
\begin{equation}\label{eq:bellman}
    v^*(t_0, x) = \inf_{u \in L^2(t_0, t_1; \mathbb R^m)} \Big[ \int_{t_0}^{t_1} \ell(y(t), u(t)) \diff t + v^*(t_1, y(t_1)), \Big]
\end{equation}
where $y(\cdot)$ is the trajectory corresponding to \eqref{eq:rhs} with control $u$ and initial value $y(t_0) = x$.
\end{theorem}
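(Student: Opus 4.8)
The plan is to prove the two inequalities separately, writing $w(t_0,x)$ for the right-hand side of \eqref{eq:bellman}. Throughout I use the flow (cocycle) property of the controlled dynamics \eqref{eq:rhs}: for a control $u$ on $[t_0,T]$ and any intermediate time $t_1\in(t_0,T)$, the trajectory $y^x(\cdot,u)$ restricted to $[t_1,T]$ coincides with the trajectory started at time $t_1$ from the point $z:=y^x(t_1,u)$ under the restricted control $u|_{[t_1,T]}$; this follows from existence and uniqueness of solutions of the initial value problem, which is guaranteed by the smoothness assumptions on $f$ and $g$ (and, as in the problem setup, the trajectories are assumed to remain in $\Omega$). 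I also use the additivity $\int_{t_0}^T = \int_{t_0}^{t_1} + \int_{t_1}^T$ of the running cost, which, together with the flow property, yields for every admissible $u$ on $[t_0,T]$ with trajectory $y$ the decomposition
\[
  \mathcal J(t_0,x,u) = \int_{t_0}^{t_1}\ell\bigl(t,y(t),u(t)\bigr)\diff t + \mathcal J\bigl(t_1,\, y^x(t_1,u),\, u|_{[t_1,T]}\bigr).
\]

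For the inequality $v^*(t_0,x)\ge w(t_0,x)$, let $u\in L^2(t_0,T;\mathbb R^m)$ be arbitrary with trajectory $y$ and set $z=y(t_1)$. The restriction $u|_{[t_1,T]}$ is admissible from $(t_1,z)$, so $\mathcal J(t_1,z,u|_{[t_1,T]})\ge v^*(t_1,z)$ by definition of the value function. Inserting this into the decomposition above gives $\mathcal J(t_0,x,u)\ge \int_{t_0}^{t_1}\ell(t,y(t),u(t))\diff t + v^*(t_1,y(t_1))\ge w(t_0,x)$, because the middle expression is one particular value of the functional over which the infimum defining $w(t_0,x)$ is taken. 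Taking the infimum over $u$ yields $v^*(t_0,x)\ge w(t_0,x)$.

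For the reverse inequality $v^*(t_0,x)\le w(t_0,x)$, fix $\varepsilon>0$ and choose $u_1\in L^2(t_0,t_1;\mathbb R^m)$ that is $\varepsilon$-optimal in the minimization defining $w(t_0,x)$, with trajectory $y_1$ and endpoint $z:=y_1(t_1)$, so that $\int_{t_0}^{t_1}\ell(t,y_1(t),u_1(t))\diff t + v^*(t_1,z)\le w(t_0,x)+\varepsilon$. Next choose $u_2\in L^2(t_1,T;\mathbb R^m)$ with $\mathcal J(t_1,z,u_2)\le v^*(t_1,z)+\varepsilon$, which exists by definition of the infimum. The concatenation $u:=u_1$ on $[t_0,t_1]$ and $u:=u_2$ on $[t_1,T]$ lies in $L^2(t_0,T;\mathbb R^m)$, and by the flow property its trajectory equals $y_1$ on $[t_0,t_1]$ and the $z$-trajectory of $u_2$ on $[t_1,T]$. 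Hence, using the decomposition once more, $v^*(t_0,x)\le \mathcal J(t_0,x,u) = \int_{t_0}^{t_1}\ell(t,y_1(t),u_1(t))\diff t + \mathcal J(t_1,z,u_2) \le \int_{t_0}^{t_1}\ell(t,y_1(t),u_1(t))\diff t + v^*(t_1,z) + \varepsilon \le w(t_0,x)+2\varepsilon$. Letting $\varepsilon\to 0$ completes the proof.

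The main obstacle I anticipate is the careful treatment of the concatenation and the flow property: one must verify that gluing two $L^2$ controls on adjacent intervals stays in $L^2$ (immediate) and, more importantly, that the concatenated control yields a globally well-defined trajectory on $[t_0,T]$ that restarts correctly at $t_1$. Under the stated smoothness of $f$ and $g$ this is a standard Picard--Lindelöf/Carathéodory argument, but it is the technical heart of the proof; the rest is bookkeeping with infima and additivity of the cost. A secondary point, already reflected in the argument above, is that the infima defining $v^*$ and $w$ need not be attained, which is why both directions are handled with $\varepsilon$-optimal controls rather than with minimizers, and why no measurable-selection machinery is needed here.
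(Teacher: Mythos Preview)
The paper does not supply its own proof of this theorem; it is stated with a citation to \cite{DeterministicHJB, Bardi1997} and used as a known result. Your argument is the standard two-inequality proof of the dynamic programming principle---decompose the cost at the intermediate time $t_1$, use the flow property to get $v^*\ge w$, and concatenate $\varepsilon$-optimal controls on $[t_0,t_1]$ and $[t_1,T]$ to get $v^*\le w$---and it is correct as written, matching the arguments in the cited references. Your caveats about the flow/concatenation step and the use of $\varepsilon$-optimal controls rather than minimizers are exactly the right technical points to flag.
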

We further have the following HJB equation
\begin{theorem}\cite{DeterministicHJB, Bardi1997}
Assume there are $\sigma, \delta \geq 1$ with $\sigma<\delta$, $\ell_0>0$ and for every compact $K\subset \mathbb R^d$ some $f_K>0$ such that
\begin{align*}
    \|f(x,u)\| &\leq f_K(1+\|x\|^\sigma) \quad \forall (x,u)\in K\times \mathbb R^m,\\
    |\ell(x,u)| &\geq \ell_0\|a\|^\delta \quad \forall (x,u)\in \mathbb R^d\times \mathbb R^m.
\end{align*} Then the value function $v^*(t, x)$ is the unique viscosity solution of
\begin{equation}
    \frac {\partial}{\partial t}v^*(t, x) + \sup_{u \in \mathbb R^m} \Big [ -\nabla v^*(t, x) \cdot (f(t, x) + g(t, x) u) - \ell(t, x, u) \Big] = 0
\end{equation}
with final condition $v(T, \cdot) = c_T(\cdot)$.
\end{theorem}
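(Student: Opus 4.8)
The plan is to follow the classical dynamic programming route of \cite{DeterministicHJB, Bardi1997}: (i) establish continuity of $v^*$; (ii) derive the viscosity sub- and supersolution inequalities from the Bellman equation \eqref{eq:bellman}; (iii) check the terminal condition; (iv) obtain uniqueness from a comparison principle. The two hypotheses --- polynomial growth $\|f(x,u)\|\le f_K(1+\|x\|^\sigma)$ and coercivity $|\ell(x,u)|\ge\ell_0\|u\|^\delta$ with $\sigma<\delta$ --- are precisely what allows this program to run on the unbounded state space $\mathbb{R}^d$ with unbounded data: the running cost grows strictly faster in $u$ than the dynamics, so (near-)optimal controls and the trajectories they produce cannot escape to infinity in finite time.

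\emph{Continuity and a priori bounds.} Since $R(t)$ is continuous and positive definite, $\ell(t,x,u)\ge c(t,x)+\lambda_{\min}(R(t))\|u\|^2$, so any admissible control with $\mathcal J(t_0,x,u)\le M$ is bounded in $L^2$ by a constant depending only on $M$. Inserting this into \eqref{eq:rhs} and using the $\sigma$-growth of $f$ together with the smoothness of $g$, a Grönwall estimate bounds the trajectories locally uniformly in $(t_0,x)$; the condition $\sigma<\delta$ rules out finite-time blow-up along minimizing sequences, so the infimum defining $v^*$ is finite. Comparing dynamics and costs for nearby initial data then yields local boundedness and local (Hölder) continuity of $v^*$ on $[0,T]\times\mathbb{R}^d$, which is enough to make sense of viscosity solutions.

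\emph{Sub/supersolution property and terminal condition.} Let $\phi\in C^1$ and suppose $v^*-\phi$ attains a local maximum at $(t_0,x_0)$ with $t_0<T$. Applying \eqref{eq:bellman} on $[t_0,t_0+h]$ with the constant control $u\equiv a\in\mathbb{R}^m$, using $v^*\le\phi$ near $(t_0,x_0)$ and $v^*(t_0,x_0)=\phi(t_0,x_0)$, dividing by $h$ and letting $h\downarrow 0$ with $\dot y(t_0)=f(t_0,x_0)+g(t_0,x_0)a$, gives the pointwise inequality for $\phi$ at $(t_0,x_0)$ for this $a$; taking the supremum over $a$ produces the subsolution inequality (the supremum is finite and attained, since $\ell$ grows like $\|u\|^\delta$ while $u$ enters the dynamics only linearly through $g$). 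Symmetrically, if $v^*-\phi$ has a local minimum, choose for each $h$ an $\varepsilon h$-optimal control $u_h$ on $[t_0,t_0+h]$; the coercivity forces $\tfrac1h\int_{t_0}^{t_0+h}\|u_h\|\diff t$ to stay bounded, so one may pass to the limit in the reversed estimate and obtain the supersolution inequality. Finally $v^*(T,x)=c_T(x)$ is immediate from \eqref{eq:cost}, giving the final condition $v^*(T,\cdot)=c_T(\cdot)$.

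\emph{Uniqueness and the main obstacle.} The Hamiltonian $H(t,x,p)=\sup_{u\in\mathbb{R}^m}\bigl(-p\cdot(f(t,x)+g(t,x)u)-\ell(t,x,u)\bigr)$ is finite, continuous, and satisfies on each $x$-compact set the structure estimate $|H(t,x,p)-H(t,y,p)|\le\omega\bigl(|x-y|(1+|p|)\bigr)$; from this one runs the doubling-of-variables argument for first-order Hamilton--Jacobi equations, adding a penalization term that grows at infinity at a rate matching the polynomial growth of $f$ and of the candidate solutions, to conclude that every subsolution lies below every supersolution, hence $v^*$ is the unique viscosity solution. The delicate step is exactly this last one on the unbounded domain with merely coercive (unbounded) cost: the penalization must be tuned so that it simultaneously dominates the growth of $f$ and of the solutions, and it is precisely $\sigma<\delta$ that makes such a choice possible --- which is why the statement is quoted from \cite{DeterministicHJB, Bardi1997} rather than reproved here.
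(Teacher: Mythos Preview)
The paper does not prove this theorem at all: it is stated with the citation \cite{DeterministicHJB, Bardi1997} and no argument is given in the text. Your outline is a faithful sketch of the classical dynamic-programming proof one finds in those references --- continuity from coercivity plus Gr\"onwall, the sub/supersolution inequalities from the Bellman principle \eqref{eq:bellman} via constant and $\varepsilon$-optimal controls, and uniqueness from a comparison principle with a growth-adapted penalization --- and you yourself note in the last sentence that the result is quoted rather than reproved. So there is nothing to compare: your proposal supplies what the paper deliberately omits, and the route you take is the standard one.
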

In the following, we refer to the collection \eqref{eq:cost} and \eqref{eq:rhs} as the open-loop control problem.
In contrast to that, in the closed-loop formulation, the parameter $u(t)$ is replaced by a mapping $\alpha : \Omega \to \mathbb R^m$, such that the closed-loop system takes the form.
\begin{align}\label{eq:closed_loop}
    \dot y = f(t, y)+g(t, y)\alpha(t, y), \quad y(t_0) = x
\end{align}
for any policy $\alpha:[0, T] \times \Omega\to \mathbb R^m$  continuous on $[0,T]$ and Lipschitz in $\Omega$ and denote the policy evaluation function
\begin{equation}\label{eq:feedback_cost}
   \mathcal J^\alpha(t_0, x) := \int_{t_0}^T c(t, y^\alpha(t)) + \alpha(x(t))' R(t) \alpha(x((t)) \diff t + c_T(y^\alpha(T)).
\end{equation}
Here, $y^\alpha(t)$ is the trajectory of the closed-loop system \eqref{eq:closed_loop} evaluated at time $t$.
We refer to the collection \eqref{eq:feedback_cost}, \eqref{eq:closed_loop} as the closed-loop problem.
If the value function is known and differentiable, we can use it to explicitly compute an optimal feedback law.
\begin{theorem}\cite{DeterministicHJB}\label{thm:optimality_condition}
An optimal feedback control is given by
\begin{equation}
    \alpha^*(t, x) = - \frac 1 2 R^{-1} g(t, x)' \nabla v^*(t, x),
\end{equation}
if $\nabla v^*$ exists.
\end{theorem}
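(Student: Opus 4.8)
The plan is to obtain $\alpha^*$ as the pointwise maximizer inside the Hamilton--Jacobi--Bellman equation, and then to certify optimality by a verification argument. First I would make the $u$-dependence in the HJB equation explicit: inserting $\ell(t,x,u) = c(t,x) + u' R(t) u$, the expression under the supremum is
\[
    -\nabla v^*(t,x)\cdot\bigl(f(t,x)+g(t,x)u\bigr) - c(t,x) - u' R(t) u,
\]
whose only $u$-dependent part is $h(u) := -\nabla v^*(t,x)\cdot\bigl(g(t,x)u\bigr) - u' R(t) u$. Since $R(t)$ is (symmetric) positive definite, $u\mapsto -u'R(t)u$ is strictly concave, so $h$ is a strictly concave quadratic with a unique maximizer determined by $\nabla_u h(u) = -g(t,x)'\nabla v^*(t,x) - 2R(t)u = 0$, i.e. $u = -\tfrac12 R(t)^{-1} g(t,x)'\nabla v^*(t,x)$. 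This is precisely the claimed $\alpha^*(t,x)$; the closed-loop policy is obtained by evaluating this map along the current state.

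\textbf{Step 2: verification.} Next I would upgrade ``pointwise maximizer of the Hamiltonian'' to ``optimal feedback law'' by the classical verification argument. Fix $(t_0,x)$, let $y^{\alpha^*}$ solve \eqref{eq:closed_loop} under $\alpha^*$ with $y^{\alpha^*}(t_0) = x$, and consider $t\mapsto v^*(t,y^{\alpha^*}(t))$. Differentiating in $t$ and using the HJB equation, together with the fact that $\alpha^*(t,y^{\alpha^*}(t))$ attains the supremum pointwise, gives $\tfrac{d}{dt} v^*(t,y^{\alpha^*}(t)) = -\ell\bigl(t,y^{\alpha^*}(t),\alpha^*(t,y^{\alpha^*}(t))\bigr)$. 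Integrating over $[t_0,T]$ and using the terminal condition $v^*(T,\cdot) = c_T(\cdot)$ yields $v^*(t_0,x) = \mathcal J^{\alpha^*}(t_0,x)$. Running the same computation for an arbitrary admissible control $u(\cdot)$, the supremum is no longer attained, so the equality weakens to $v^*(t_0,x) \le \mathcal J(t_0,x,u)$; comparing the two shows $\mathcal J^{\alpha^*}(t_0,x) = \inf_u \mathcal J(t_0,x,u) = v^*(t_0,x)$, i.e. $\alpha^*$ is optimal. Equivalently, one may telescope the Bellman equation \eqref{eq:bellman} along a partition of $[t_0,T]$ and pass to the limit.

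\textbf{Main obstacle.} The delicate point is regularity. A priori $v^*$ is only the viscosity solution of the HJB equation, so the chain-rule step in Step 2 is merely formal unless $v^*$ is jointly $C^1$ in $(t,x)$; this is why the statement is qualified by ``if $\nabla v^*$ exists'', and a fully rigorous treatment either assumes this extra smoothness or replaces the differentiation along trajectories by a viscosity comparison argument. A related point to check is that $\alpha^*$ is admissible, so that \eqref{eq:closed_loop} is well-posed --- continuous in $t$ and Lipschitz in $x$ --- which follows from continuity of $R^{-1}$, smoothness of $g$, and sufficient regularity of $\nabla v^*$ (here Lipschitz dependence on $x$, slightly more than mere existence).
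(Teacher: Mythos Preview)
The paper does not give its own proof of this theorem; it is stated with a citation to \cite{DeterministicHJB} and used as a black box throughout. Your proposal --- extract the pointwise maximizer of the Hamiltonian from the quadratic $u$-dependence, then run the standard verification argument by differentiating $v^*$ along the closed-loop trajectory --- is exactly the classical proof found in the cited reference and in standard texts such as Bardi--Capuzzo-Dolcetto. Your identification of the regularity caveat (viscosity versus classical solution, admissibility of $\alpha^*$ for well-posedness of \eqref{eq:closed_loop}) is also the right place to flag care, and matches the qualifier ``if $\nabla v^*$ exists'' in the statement.
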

For $\tau\geq0$ we define the flow  $\Phi_{\tau}^\alpha:[0,T] \times \Omega\to [\tau,T+\tau] \times  \Omega$ such that $\Phi_{\tau}^\alpha(t_0, x) = (t_0 + \tau, y(t_0+\tau))$, where $y(t_0+\tau)$ is the evaluation of the trajectory at time $\tau+t_0$ with initial condition $y(t_0) = x$ w.r.t \eqref{eq:closed_loop}.

\section{Reformulation as a series of Open-Loop control Problems}
In this section we reinterpret the Bellman equation as a series of open-loop control problems.
These open-loop control problems are defined on a small subset of $[t_l, t_{l+1}) \subset [0, T]$, which is why we refer to them as local open-loop control problems.
To this end, we consider a discrete set of time points $0 = t_1 < \dots < t_L = T$.
We notice that the Bellman equation has essentially the same structure as the finite horizon control problem and that we can interpret it as such.
For convenience we repeat the Bellman equation below
\begin{equation}\label{eq:bellman_repeat}
    v^*(t_l, x) = \inf_{u \in L^2(t_l, t_{l+1}; \mathbb R^m)} \Big[ \int_{t_l}^{t_{l+1}} \ell(y(t), u(t)) \diff t + v^*(t_{l+1}, y(t_{l+1})) \Big],
\end{equation}
and notice that $v^*(t_{l+1}, y(\cdot))$ can be interpreted as a final condition and $\ell(y(\cdot), u(\cdot))$ as the running cost with governing ODE system \eqref{eq:rhs}.

The local open-loop control problem can be solved by adjoint methods. Recall that the Pontryagin Maximum Principle (PMP) gives as necessary conditions for a minimal control
\begin{align*}
    \dot y(t) = f(t,y)+g(t,y)\lambda (t), & & y(0)=x\\
    \dot \lambda (t)  = - \partial_y H(y,u,\lambda) & & \lambda(T) = \partial_y c_T(y(T))\\
    H(y,u,\lambda) = \min_{\tilde u} H(y,\tilde u ,\lambda)
\end{align*}
where $H(y,u,\lambda) = \ell(y,u) + \lambda^T (f(t,y)+g(t,y)u)$, \cite{Pontryagin, Dontchev1995}.
After introducing a time discretization, this system is solved by standard gradient decent methods, see e.g. \cite{herzog2010algorithms}.
Note that in this formulation the gradient of the final condition appears, which means that by setting the value function to be the final condition we might run into regularity issues.
However, we ignore this problem for now and assume that the value function is differentiable.
We alleviate this problem in Section \ref{sect:vmc}.

Traditionally, open-loop control methods suffer from long time horizons. However, due to our time-discretization, we can assume that the time-horizon is small.
This results in the following algorithm for the continuous case.

\begin{algorithm}[H]\label{algo:pol_it_cont}
\SetAlgoLined
\caption{Backwards solution to the Bellman equation - continuous case}\label{opt_con:algo:pol_it}
\SetKwInOut{Input}{input}\SetKwInOut{Output}{output}
\SetKwInOut{Output}{output}\SetKwInOut{Output}{output}
\Input{Time points $0 = t_0 < \dots < t_L < T$.}
\Output{The value function $v^*$ evaluated at the time points}
Set $v^*(t_L, \cdot) = c_T(\cdot)$

\For{$l = L-1$ to $0$}{
    Calculate $v^*(t_l, \cdot)$ by solving the local open-loop control problem \eqref{eq:bellman_repeat} with final condition $v^*(t_{l+1}, \cdot)$.\label{algo:line_pol_it_cont}
}
\end{algorithm}
We observe that within this formulation we have not done any discretization other than setting time points where we want to compute the value function.
The next step is usually a time-discretization of the underlying ODE \eqref{eq:rhs}, such that the optimal control problem can be solved.
This time-discretization does not have to be on the same time-grid as the time-points where the value function shall be evaluated.
In fact, it can be finer.
In this case we use linear interpolation of the value function to obtain the value function at the intermediate points
\begin{equation}\label{eq:linear_interpolation}
   v^*(t, x) \approx \frac{t_{l+1} - t}{\tau} v^*(t_l, x) + \frac{t -t_l}{\tau} v^*(t_{l+1}, x) \text{ for } t \in [t_l, t_{l+1}), x \in \Omega.
\end{equation}
Note that this interpolation does not affect Algorithm \ref{algo:pol_it_cont} as in this algorithm the value function is only evaluated at the time-points $t_l$.
This is only relevant when the value function is used to compute a feedback-law via Theorem \ref{thm:optimality_condition}.
We also make use of this interpolation for the policy iteration approach in Section \ref{sect:polit}.

Assuming an equidistant discretization of the ODE, the dimension of the local open-loop control problems increases only linearly in time and does not increase with the spatial dimension.
However, the dimension of the regression problem is problematic and is covered in Section \ref{sect:TT}.
Note that solving the local optimal control problems for every initial value is computationally not feasible.
We instead draw samples $x_i \in \Omega$ and use a regression type approach as described in Section \ref{sect:vmc}.
\section{Policy Iteration approach}\label{sect:polit}
Another popular approach is the Policy iteration algorithm. This Newton-type method is an iterative scheme that alternates between solving a linearized Bellman equation and improving a policy.

As in the previous chapter, we use the same time subdivision $0 = t_0 < \dots < t_L = T$.
We use the policy iteration to approximate the value function by solving the local optimal control problems on our time-grid in the following way.
For a fixed policy $\alpha$ defined on $[t_l, t_{l+1})$, the corresponding linearized Bellman equation takes the linearized form
\begin{align}
    v^\alpha(t_l, \cdot) &= \int_{t_l}^{t_{l+1}} \ell_{t - t_l}^{\alpha}(t_l, \cdot)dt + v^*(t_{l+1}, \Phi_{\tau}^{\alpha} (t_{l+1}, \cdot)), \quad v^\alpha(t_{l+1}, \cdot) = v^*(t_{l+1}, \cdot). \label{eq:coupled_hjb_local}
\end{align}
Note that plugging in the optimal policy recovers the Bellman equation \eqref{eq:bellman}.
In this formulation the policies are depending on the time $t$.
However, if we only compute $v^\alpha(t_l, x)$ for every $l$, the policy using the optimality condition in Theorem \ref{thm:optimality_condition} cannot be evaluated at times other than $t_l$.
Thus, we again make use of interpolation between the time-points
\begin{equation}\label{eq:linear_interpolation_valpha}
   v^\alpha(t, \cdot) = \frac{t_{l+1} - t}{\tau} v^\alpha(t_l, \cdot) + \frac{t -t_l}{\tau} v^*(t_{l+1}, \cdot) \text{ for } t \in [t_l, t_{l+1}).
\end{equation}
The policy iteration algorithm is then given by Algorithm \ref{algo:pol_it_local}.
\newline
\begin{algorithm}[H]
\SetAlgoLined
\caption{Policy Iteration for approximating the local optimal control problem.}\label{algo:pol_it_local}
\SetKwInOut{Input}{input}\SetKwInOut{Output}{output}
\SetKwInOut{Output}{output}\SetKwInOut{Output}{output}
\Input{A Policy $\alpha_{0}$,  $0 \leq t_l < t_{l+1} \leq T$ and an approximation of $v^*(t_{l+1}, \cdot)$, denoted by $\hat v(t_{l+1}, \cdot)$}
\Output{An approximation of $v^*(t_l, \cdot)$ and $\alpha^*(t_l, \cdot)$, denoted by $\hat v(t_{l+1}, \cdot)$ and $\hat \alpha(t_{l+1}, \cdot)$.}
Set $k = 0$.

\While{not converged}{
  Solve the linear equation
\begin{equation}\label{eq:pol_it_time}
     v_{k+1}(x) = \int_{t_l}^{t_{l+1}} \ell_{t - t_l}^{\alpha_k}(t_l, x) dt + \hat v(t_{l+1}, \Phi_{\tau}^{\alpha_k} (t_l, x))
\end{equation}
and use linear interpolation between $v_{k+1}$ and $\hat v(t_{l+1}, \cdot)$ as in \eqref{eq:linear_interpolation_valpha} to obtain $v_{k+1}(t, \cdot)$ for $t \in [t_l, t_{l+1})$.
Then update the policy according to
\begin{equation*}
\alpha_{k+1}(t, x) = - \frac 1 2 B(t)^{-1} g(t, x)^T \nabla v_{k+1}(t, x), \quad t \in [t_l, t_{l+1}).
\end{equation*}

$k = k+1$.
}
Set $\hat v(t, \cdot) = v_{k}(t, \cdot)$ and $\hat \alpha(t, \cdot) = \alpha_{k}
$ for $t \in [t_l, t_{l+1})$.
\end{algorithm}

In \cite{Puterman,Saridis} the convergence of the exact Policy iteration is analyzed and in \cite{Beard} extended to Galerkin methods. The convergence and error estimates are still open problems in the Least-Squares setting. Due to the policy update where the gradient of $v$ appears within the fixed-point iteration, this method can be prone to overfitting.
We address this issue by adding a regularization term, as described in \eqref{eq:regularizer}.

Note that the right-hand-side of \eqref{eq:pol_it_time} can be computed point wise without knowledge of the underlying dynamical system. Only a black-box solver for $\Phi_{t_l}^{\alpha}$ and a function for evaluation of $\ell_{t_l}^{\alpha}$ is needed. We also observe, that the time-discretization of the value function does not not necessarily have to be the same as the time discretization of the ODE.
The above algorithm is used to solve the Bellman equation on the complete time frame.


\section{Formulation of the Regression Problem}\label{sect:vmc}
For solving (\ref{eq:bellman_repeat}, \ref{eq:pol_it_time}), computing the value function for every point $x$ is not feasible.
Moreover, we cannot expect to have access to the exact value function $v^*(t_{l+1}, \cdot)$ as final condition, but only an approximation. We compute the approximation of the value function by a regression ansatz, which is very similar for both approaches. We first describe the procedure for the local open-loop ansatz and thereafter comment on the policy iteration approach.

We replace the value function as a final condition with an approximation $\hat v_{l+1} (\cdot)$ that we computed in the step before, i.e. we replace the local optimal control problem \eqref{eq:bellman_repeat} by
\begin{equation}\label{eq:local_opt_tilde_v}
    \tilde v_l(x) = \inf_{u \in L^2(t_l, t_{l+1}; \mathbb R^m)} \int_{t_l}^{t_{l+1}} \ell(y(t), u(t)) \diff t + \hat v_{l+1}(y(t_{l+1})).
\end{equation}
Note that we write $\tilde v_l$ on the l.h.s. and $\hat v_{l+1}$ on the r.h.s. of the equation.

We do not use the same notation because we are not be able to find the exact minimizer $\tilde v_l$ for every initial value $x$.
Instead we find an approximation, which we again denote by $\hat v_l$.
In order to find this approximation we observe that $\tilde v_l \in L^2(\Omega) =: V$.
Choosing an ansatz space $\mathcal M \subset V$ and we seek an approximation in the sense of a Least-Squares method
\begin{equation}
    \inf_{v \in \mathcal M} \| v - \tilde v_l \|_V^2.
\end{equation}
As we cannot compute the norm, we replace this equation by a discrete version using empirical risk minimization \cite{cucker}.
To this end we consider a set of samples $\{ x_1, \dots x_J \} \subset \Omega$ and compute $\tilde v_l (x_i)$ for all $1 \leq j \leq J$,
which is a regression problem.
In particular we set
\begin{equation}
    \hat v_l = \argmin_{v \in \mathcal M} \frac{1}{J} \sum_{j = 1}^J |v(x_j) - \tilde v(x_j) |^2.
\end{equation}
In this paper we consider tensor networks as ansatz space.
Error estimates w.r.t. the number of samples are given in \cite{eigel2020adaptive} for our ansatz space.
At this point we notice that the problem of a potentially irregular value function can be solved here.
By replacing the final condition $v^*(t_l, \cdot) \in L^2(\Omega)$ by the approximation $\hat v_l \in \mathcal M$ we can enforce a smooth final condition by choosing an ansatz space $\mathcal M$ consisting of smooth functions. 
This results in the following discrete version of Algorithm \ref{algo:pol_it_cont}.

\begin{algorithm}[H]\label{algo:pol_it_disc}
\SetAlgoLined
\caption{Backwards solution to the Bellman equation - discrete case}\label{opt_con:algo:pol_it_discr}
\SetKwInOut{Input}{input}\SetKwInOut{Output}{output}
\SetKwInOut{Output}{output}\SetKwInOut{Output}{output}
\Input{Time points $0 = t_0 < \dots < t_L < T$ and samples $x_1, \dots, x_J \in \Omega$}
\Output{An approximation $\hat v_l$ of the value function $v^*(t_l, \cdot)$ evaluated at the time points}
Set $\hat v_L = c_T$

\For{$l = L-1$ to $0$}{
    Calculate $\tilde v_l(x_j)$ by solving the local open-loop control problem \eqref{eq:local_opt_tilde_v} with initial values $x_j$ and final condition $\hat v_{l+1}$.\label{algo:line_pol_it_discr}
    
    Compute $\hat v_l$ by solving the regression problem
    \begin{equation}\label{eq:algo_regression}
        \hat v_l = \argmin_{v \in \mathcal M} \frac{1}{J} \sum_{j = 1}^L |v(x_j) - \tilde v(x_j) |^2.
    \end{equation}
}
\end{algorithm}
\begin{remark}
    Up until now we have not specified the ansatz set $\mathcal M$. Note that here, any ansatz set where \eqref{eq:algo_regression} is computable can be used.
    In low dimension, traditianal ansatz sets such as finite elements, splines or polynomial ansatz spaces are feasible.
    In higher dimension, other ansatz sets such as neural networks or tensor networks, as considered here, are possible.
    Note that computing this algorithm consists of $L$ equations that have to be solved where $J$ samples have to be generated.
    This formulation is especially viable if solving the regression problems is particularly expensive and if the local open-loop control problems can be solved efficiently.
\end{remark}
For the policy iteration approach the discrete version of the algorithm is very similar.
Here, we simply replace \eqref{eq:pol_it_time} by a discrete version 
\begin{equation}\label{eq:discrete_pol_it}
     v_{k+1} = \argmin_{v \in \mathcal m}  \frac 1 J \sum_{j=1}^J \int_{t_l}^{t_{l+1}} | v(x_j) - \ell_{t - t_l}^{\alpha_k}(t_l, x_j) dt - \hat v(t_{l+1}, \Phi_{\tau}^{\alpha_k} (t_l, x_j))|^2.
\end{equation}
We finalize this section by giving an error propagation under the assumption that we can bound the difference between $\tilde v$ and $\hat v$ in the $L^\infty(\Omega)$ norm.

We first prove that for optimal control problems of similar form, the value functions are similar.
\begin{lemma}\label{lem:v_cont_wrt_final}
Let $v_1, v_2$ be value functions to optimal control problems of the form
\[ v_1 (x) = \inf_u \int_{t_0}^{t_1} \ell(y, u) dt + c_1(y(t_1)), \quad v_2(x) = \inf_u \int_{t_0}^{t_1} \ell(y, u) dt + c_2(y(t_1)), \]
with the same underlying ODEs, $y(\cdot)$ be the solution to the ODEs with initial condition $x$ and
$| c_1(x) - c_2(x) | \leq \delta$ for all $x$. Then
\begin{equation*}
    |v_1(x) - v_2(x) | \leq \delta
\end{equation*}
as well.
\end{lemma}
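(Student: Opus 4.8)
The plan is to show the two inequalities $v_1(x) - v_2(x) \le \delta$ and $v_2(x) - v_1(x) \le \delta$ by a standard comparison argument, exploiting that both control problems share the same running cost $\ell$ and the same dynamics, so that any control admissible for one is admissible for the other and produces the same trajectory. The key observation is that the only difference between the two functionals, for a \emph{fixed} control $u$ and its corresponding trajectory $y(\cdot)$ with $y(t_0) = x$, is the terminal term $c_1(y(t_1))$ versus $c_2(y(t_1))$, and these differ by at most $\delta$ by hypothesis.

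First I would fix $x \in \Omega$ and an arbitrary admissible control $u \in L^2(t_0, t_1; \mathbb{R}^m)$, letting $y(\cdot)$ denote the unique trajectory of the shared ODE with $y(t_0) = x$. Then
\[
  \int_{t_0}^{t_1} \ell(y, u)\,dt + c_1(y(t_1)) \;\le\; \int_{t_0}^{t_1} \ell(y, u)\,dt + c_2(y(t_1)) + \delta,
\]
using $c_1(y(t_1)) \le c_2(y(t_1)) + |c_1(y(t_1)) - c_2(y(t_1))| \le c_2(y(t_1)) + \delta$. The left-hand side is bounded below by $v_1(x)$ (it is one of the quantities over which the infimum defining $v_1$ is taken), so $v_1(x) \le \int_{t_0}^{t_1} \ell(y,u)\,dt + c_2(y(t_1)) + \delta$ for \emph{every} admissible $u$. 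Taking the infimum over $u$ on the right-hand side yields $v_1(x) \le v_2(x) + \delta$. Swapping the roles of $c_1$ and $c_2$ (the hypothesis $|c_1 - c_2| \le \delta$ is symmetric) gives $v_2(x) \le v_1(x) + \delta$, and combining the two bounds gives $|v_1(x) - v_2(x)| \le \delta$.

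There is essentially no hard part here; the argument is the familiar ``infimum of a sum is controlled by a sum of perturbed terms'' trick. The one point requiring a word of care is the admissibility/well-posedness issue: one must know that for a given control $u$ the trajectory $y(\cdot)$ exists and is the same for both problems, which is immediate since the dynamics \eqref{eq:rhs} are shared and the standing smoothness assumptions on $f, g$ guarantee a unique solution; and one must be careful that the infimum can legitimately be ``pulled through'' — this is fine because the inequality $v_1(x) \le (\text{functional for } u \text{ with } c_2) + \delta$ holds for each individual $u$, so it holds for the infimum over $u$ of the right-hand side. If one wanted to be fully rigorous about infima possibly not being attained, one would phrase the argument with an $\varepsilon$-optimal control for $v_2$ and let $\varepsilon \to 0$, but this is a routine epsilon-management that does not change the structure.
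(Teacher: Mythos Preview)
Your proof is correct and uses essentially the same idea as the paper: fix a (near-)optimal control for one problem, plug it into the other, and observe that the only discrepancy is the terminal term, which is bounded by $\delta$. The paper phrases this as a proof by contradiction with an explicit $\varepsilon$-optimal control $u_2$ for $v_2$, while you give the direct two-sided inequality and relegate the $\varepsilon$-management to a remark; the substance is the same.
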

The proof is given in the appendix. This Lemma allows us to prove an error propagation within our discrete algorithm.
The main idea is that the solution $\tilde v_l$ to the local optimal control problem \eqref{eq:local_opt_tilde_v} can be seen as an intermediate between the exact value function $v^*(t_l, \cdot)$ and the computable approximation $\hat v_l$.
\begin{theorem}\label{thm:error_propagation}
By $\tilde v$ denote the solution to the local optimal control problem \eqref{eq:local_opt_tilde_v} with terminal condition $\hat v_{l+1}$.
Assume that $ |\tilde v_l(x) - \hat v_l(x)| \leq \delta$ for all $x \in \Omega$ and all $1 \leq l \leq L$.
Then
\[ \| \hat v_{L - l}(\cdot) - v^*(T - l \tau, \cdot) \|_{L^\infty (\Omega)}  \leq l \delta \]
for all $0 \leq l \leq L$.
\end{theorem}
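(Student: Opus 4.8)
The plan is to prove the bound by induction on $l$, using Lemma~\ref{lem:v_cont_wrt_final} as the workhorse for each step and the triangle inequality to accumulate the error. Write $\hat v_{L-l}$ for the computed approximation at time $t_{L-l} = T - l\tau$, and $v^*(T - l\tau, \cdot)$ for the exact value function. The base case $l = 0$ is immediate: $\hat v_L = c_T = v^*(T, \cdot)$ by the initialization in Algorithm~\ref{opt_con:algo:pol_it_discr}, so the error is $0 \leq 0 \cdot \delta$.

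For the inductive step, assume $\|\hat v_{L-l}(\cdot) - v^*(T - l\tau, \cdot)\|_{L^\infty(\Omega)} \leq l\delta$. I would introduce the intermediate object $\tilde v_{L-l-1}$, the exact solution of the local optimal control problem~\eqref{eq:local_opt_tilde_v} on $[t_{L-l-1}, t_{L-l}]$ with terminal condition $\hat v_{L-l}$. Now split
\[
\|\hat v_{L-l-1} - v^*(T - (l+1)\tau, \cdot)\|_{L^\infty} \leq \|\hat v_{L-l-1} - \tilde v_{L-l-1}\|_{L^\infty} + \|\tilde v_{L-l-1} - v^*(T - (l+1)\tau, \cdot)\|_{L^\infty}.
\]
The first term is bounded by $\delta$ directly by hypothesis. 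For the second term, observe that $v^*(T - (l+1)\tau, \cdot)$ is, by the Bellman equation~\eqref{eq:bellman_repeat}, exactly the value function of the same local optimal control problem on $[t_{L-l-1}, t_{L-l}]$ but with terminal condition $v^*(T - l\tau, \cdot)$ instead of $\hat v_{L-l}$. Since the two local problems share the same running cost $\ell$ and the same ODE, Lemma~\ref{lem:v_cont_wrt_final} applies with $c_1 = \hat v_{L-l}$, $c_2 = v^*(T - l\tau, \cdot)$, and the bound on $|c_1 - c_2|$ being exactly the inductive hypothesis $l\delta$. Hence the second term is $\leq l\delta$, and adding gives $(l+1)\delta$, closing the induction.

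The only genuine subtlety, and the step I would treat most carefully, is the identification that $v^*(T-(l+1)\tau,\cdot)$ coincides with the value function of the local problem~\eqref{eq:local_opt_tilde_v} in which the terminal cost is the \emph{exact} $v^*(T-l\tau,\cdot)$ — this is precisely the content of~\eqref{eq:bellman_repeat} with $t_l = t_{L-l-1}$, $t_{l+1} = t_{L-l}$, so no extra work is needed beyond citing it. One should also confirm that Lemma~\ref{lem:v_cont_wrt_final} is stated for a general terminal cost pair without regularity assumptions on $c_1, c_2$ beyond the uniform bound on their difference, which it is; in particular the possible non-smoothness of $v^*(T-l\tau,\cdot)$ causes no problem here since the lemma's proof only uses the comparison of cost functionals along shared trajectories. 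Everything else is the triangle inequality and bookkeeping of the index shift $l \mapsto L - l$, so the proof is short once the induction is set up correctly.
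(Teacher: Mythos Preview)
Your proof is correct and follows essentially the same approach as the paper: induction on $l$ with base case $\hat v_L = c_T = v^*(T,\cdot)$, and the inductive step handled by the triangle inequality together with Lemma~\ref{lem:v_cont_wrt_final} applied to the two local problems whose terminal conditions differ by at most $l\delta$. Your explicit identification of $v^*(T-(l+1)\tau,\cdot)$ with the local value function via the Bellman equation~\eqref{eq:bellman_repeat} makes the argument slightly more self-contained than the paper's version.
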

\begin{proof}
We prove the theorem inductively. First note that we have $v^*(T, \cdot) = \hat v_L(\cdot)$. Thus,
\begin{equation*}
    \| \hat v_{L - 1} (\cdot) - v^*(T - \tau, \cdot) \|_{L^\infty(\Omega)} \leq \delta 
\end{equation*}
by assumption. 

Now assume that 
\begin{equation}\label{eq:proof3}
\| \hat v_{L - l}(\cdot) - v^*(T- l \tau, \cdot) \|_{L^\infty(\Omega)} < l \delta.
\end{equation}
Finally, we have
\begin{align}
    &\quad \| \hat v_{L - (l+1)}(\cdot) - v^*(T - (l+1) \tau, \cdot) \|_{L^\infty(\Omega)} \\
    &\leq  \underbrace{\| \hat v_{L - (l+1)}(\cdot) - \tilde v_{L -(l+1)}(\cdot) \|_{L^\infty(\Omega)}}_{\leq \delta \text{ by assumption}} + \underbrace{\| \tilde v_{L - (l+1)} (\cdot) - v^*(T - (l+1) \tau, \cdot) \|_{L^\infty(\Omega)}}_{\leq l \delta \text{ by \eqref{eq:proof3} plugged into Lemma \ref{lem:v_cont_wrt_final}}} \\
    &\leq (l+1) \delta.
\end{align}
This finishes the proof.
\end{proof}
This theorem yields an error bound in the $L^\infty(\Omega)$ norm.
Using a regression approach, however does only yield bounds in the $L^2(\Omega)$ norm.
Here, we make use of our finite-dimensional model set.
As the model set $\mathcal M$ is continuously embedded into its span, i.e.  $ \mathcal M \hookrightarrow \spa(\mathcal M)$, this space is a finite-dimensional, and thus closed, subspace of $L^2(\Omega)$.
Due to the boundedness of $\Omega$ and the regularity of the ansatz functions this subspace can also be equipped with the $L^\infty(\Omega)$ norm and as a finite dimensional space these norms are equivalent, which means that a bound in the $L^2(\Omega)$ norm is in fact also a bound in the $L^\infty(\Omega)$ norm.

\section{Representation of the Value Function}\label{sect:TT}
The question of how to represent the approximation of the value function $\hat v_l \in \mathcal M$ and how to find it within the ansatz space is remaining. For the sake of readability we drop the subscript $l$ in this section.

Traditional methods such as finite elements, splines or multi-variate polynomials leads to a computational complexity that scales exponentially in the state space dimension $d$.
However, interpreting the coefficients of such ansatz functions as entries in a high-dimensional tensor allows us to use tensor compression methods to reduce the number of parameters.
To this end, we define a set of functions $\{ \phi_1, \dots, \phi_m \}$ with $\phi_i : \mathbb R \to \mathbb R$ ,
e.g. one-dimensional polynomials or finite elements.
The approximation $\hat v : \mathbb{R}^d \rightarrow \mathbb{R}$ takes the form
\begin{equation}
\label{eq:V c}
    \hat v(x_1, \dots, x_d) = \sum_{i_1 = 1}^m \dots \sum_{i_d = 1}^m c_{i_1, \dots, i_d} \phi_{i_1}(x_1) \cdots \phi_{i_d} (x_d).
\end{equation}
Note that this is a standard formulation for finite elements or multivariate polynomial bases.
For the sake of simplicity we choose the set of ansatz functions to be the same in every dimension. 
The coefficient tensor $c \in \mathbb R^{m \times m \times \dots \times m} \equiv \mathbb R^{m^d}$ suffers from the curse of dimensionality since the number of entries increases exponentially in the dimension $d$.
In what follows, we review the tensor train format to compress the tensor $c$.

For the sake of readability we will henceforth write $c_{i_1, \dots, i_d} = c[i_1, \dots, i_d]$ and represent the contraction of the last index of a tensor $w_1 \in \mathbb R^{r_1 \times m \times r_2}$ with the first index of another tensor $w_2 \in \mathbb R^{r_2 \times m \times  r_3}$ by
\begin{subequations}
\begin{align}
    w &= w_1 \circ w_2 \in \mathbb R^{r_1 \times m \times m \times r_3}, \\
    w[i_1, i_2, i_3, i_4] &= \sum_{j = 1}^{r_2} w_1[i_1, i_2, j] w_2[j, i_3, i_4].
\end{align}
\end{subequations}
In the literature on tensor methods, graphical representations of general tensor networks are widely used.
In these pictorial descriptions, the contractions $\circ$ of the component tensors are indicated as edges between vertices of a graph.
As an illustration, we provide the graphical representation of an order-$4$ tensor and a tensor train representation (see Definition \ref{def:tensor_train} below) in Figure \ref{TT:fig:hosvd}.
\begin{figure}[h!]
    \centering
    \begin{tikzpicture}
        \begin{scope}[every node/.style={draw,  fill=white}]
        \node (A1) at (0,0) {$u_1$}; 
        \node (A2) at (1.25,0) {$u_2$}; 
        \node (A3) at (2.5,0) {$u_3$}; 
        \node (A4) at (3.75,0) {$u_4$}; 
        
        \node (A0) at (-3,0) {$c$}; 
        \end{scope}
        \node (B2) at (-1,0) {$=$}; 
        \begin{scope}[every edge/.style={draw=black,thick}]
        	\path [-] (A1) edge node[midway,left,sloped] [above] {$r_1$} (A2);
        	\path [-] (A2) edge node[midway,left,sloped] [above] {$r_2$} (A3);
        	\path [-] (A3) edge node[midway,left,sloped] [above] {$r_3$} (A4);
        	
        	\path [-] (A1) edge node[midway,left] [right] {$m$} +(-90:1);
        	\path [-] (A2) edge node[midway,left] [right] {$m$} +(-90:1);
        	\path [-] (A3) edge node[midway,left] [right] {$m$} +(-90:1);
        	\path [-] (A4) edge node[midway,left] [right] {$m$} +(-90:1);
        	
        	\path [-] (A0) edge node[midway,left] [right] {$m$} +(-90:1);
        	\path [-] (A0) edge node[midway,left] [above] {$m$} +(180:1);
        	\path [-] (A0) edge node[midway,left] [right] {$m$} +(90:1);
        	\path [-] (A0) edge node[midway,left] [above] {$m$} +(0:1);
        \end{scope}
    \end{tikzpicture}
    \caption{An order $4$ tensor and a tensor train representation. }
    \label{TT:fig:hosvd}
\end{figure}
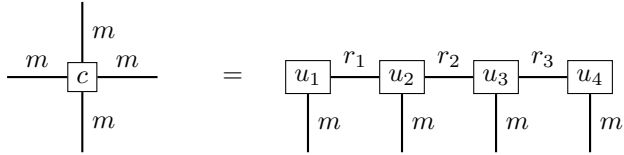

Tensor train representations of $c$ can now be defined as follows \cite{oseledets2011tensor}.
\begin{definition}[Tensor Train]\label{def:tensor_train}
    Let $c \in \mathbb R^{m \times \dots \times m}$.
    A factorization
    \begin{equation}
    \label{eq:TT rep}
        c = u_1 \circ u_2 \circ \dots \circ u_d,
    \end{equation}
    where $u_1 \in \mathbb R^{m \times r_1}$, $u_i \in \mathbb R^{r_{i-1} \times m \times r_i}$, $2 \leq i \leq d-1$, $u_d \in \mathbb R^{r_{d-1} \times m}$, is called \emph{tensor train representation} of $c$. 
    We say that $u_i$ are \emph{component tensors}. The tuple of the dimensions $(r_1, \dots, r_{d-1})$ is called the representation rank and is associated with the specific representation \eqref{eq:TT rep}.
    In contrast to that, the tensor train rank (TT-rank) of $c$ is defined as the minimal rank tuple $\mathbf r = (r_1, \dots, r_{d-1})$, such that there exists a TT representation of $c$ with representation rank equal to $\mathbf r$. Here,  minimality of the rank is defined in terms of the partial order relation on $\mathbb N^d$ given by
\[ \mathbf s \preceq \mathbf t \iff s_i \leq t_i \text{ for all } 1 \leq i \leq d,\]
for $\mathbf r = (r_1, \dots, r_d), \, \mathbf s = (s_1, \dots, s_d) \in \mathbb N^d$.
\end{definition}

It can be shown that every tensor has a TT-representation with minimal rank, implying that the TT-rank is well defined \cite{holtz2012manifolds}.
An efficient algorithm for computing a minimal TT-representation is given by the  Tensor-Train-Singular-Value-Decomposition (TT-SVD) \cite{oseledets2009breaking}.
Additionally, the set of tensor trains with fixed TT-rank forms a smooth manifold, and if we include lower ranks, an algebraic variety is formed \cite{landsberg2012tensors}.

The TT-representation of \eqref{eq:V c} is then given as
\begin{multline}
    \hat v(x) = \sum_{i_1}^m \cdots \sum_{i_d}^m \sum_{j_1}^{r_1} \cdots \sum_{j_{d-1}}^{r_{d-1}} u_1[i_1, j_1] u_2 [j_1, i_2, j_2] \cdots  \\
    \cdots u_d[j_{d-1}, i_d] \phi_{i_1}(x_1) \cdots \phi_{i_d}(x_d).
\end{multline}
Introducing the compact notation
\[ \phi: \mathbb R \to \mathbb R^{m}, \quad \phi(x) = [\phi_1(x), \dots, \phi_{m}(x) ], \]
the corresponding graphical TT-representation (with $d=4$ for definiteness) is then given as in Figure \ref{fig:valuefun1}. 
\begin{figure}[h!]
    \centering
    \begin{tikzpicture}
        \begin{scope}[every node/.style={draw,  fill=white}]
        \node (A1) at (0,0) {$u_1$}; 
        \node (A2) at (1.25,0) {$u_2$}; 
        \node (A3) at (2.5,0) {$u_3$}; 
        \node (A4) at (3.75,0) {$u_4$}; 
        
        \node (B1) at (0,-1) {$\phi(x_1)$}; 
        \node (B2) at (1.25,-1) {$\phi(x_2)$}; 
        \node (B3) at (2.5,-1) {$\phi(x_3)$}; 
        \node (B4) at (3.75,-1) {$\phi(x_4)$}; 
        \end{scope}
        \node (C0) at (-2,0) {$\hat v(x)$}; 
        \node (C1) at (-1,0) {$=$}; 
        \begin{scope}[every edge/.style={draw=black,thick}]
        	\path [-] (A1) edge node[midway,left,sloped] [above] {$r_1$} (A2);
        	\path [-] (A2) edge node[midway,left,sloped] [above] {$r_2$} (A3);
        	\path [-] (A3) edge node[midway,left,sloped] [above] {$r_3$} (A4);
        	\path [-] (A1) edge node[midway,left] [right] {$m$} (B1);
        	\path [-] (A2) edge node[midway,left] [right] {$m$} (B2);
        	\path [-] (A3) edge node[midway,left] [right] {$m$} (B3);
        	\path [-] (A4) edge node[midway,left] [right] {$m$} (B4);
        	
        \end{scope}
    \end{tikzpicture}
    \caption{Graphical representation of $v_n: \mathbb R^4 \to \mathbb R$.}
    \label{fig:valuefun1}
\end{figure}
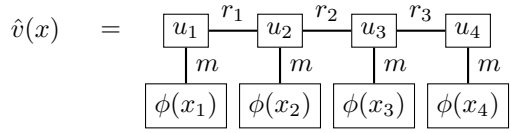

Within the TT format regression problems can be solved by using the \emph{alternating least squares} (ALS) algorithm \cite{ALS},
where the regression problem, suffering from the curse of dimensionality is reduced to a sequence of linear sub-problems, whose dimensionality is given by the size of a single component tensor. The routine is based on the observation, that fixing all component tensors but one reduces the multilinear ansatz to a linear problem, that can be solved by standard linear regression models, c.f. Figure \ref{fig:ALS}.
Note that we replace the integral in Figure \ref{fig:ALS} by Monte-Carlo quadrature, c.f. Section \ref{sect:vmc}.
Iteratively updating the component tensors produces a monotone sequence, that converges at least locally. 
\begin{figure}[h!]
    \centering
    \begin{tikzpicture}
        \begin{scope}[every node/.style={draw,  fill=white}]
        \node (A1) at (0,0) {$u_1$};

        \node (A3) at (2.5,0) {$u_3$}; 
        \node (A4) at (3.75,0) {$u_4$}; 
        
        \node (B1) at (0,-1) {$\phi(x_1)$}; 
        \node (B2) at (1.25,-1) {$\phi(x_2)$}; 
        \node (B3) at (2.5,-1) {$\phi(x_3)$}; 
        \node (B4) at (3.75,-1) {$\phi(x_4)$}; 
        \end{scope}
        \node (C1) at (-1.9,-0.5) {$\displaystyle \min_{\tilde u\in \mathbb R^{r_1\times m\times r_2}}\int_\Omega |$}; 
        \node[draw,circle] (A2) at (1.25,0) {$\tilde u$}; 
        \node (A12) at (0.8,0){};
        \node (A23) at (1.7,0){};
        \node (D2) at (1.25,-0.35){};
        \node (C2) at(6.3,-0.5){$-r(x_1,\dots,x_4)|^2\text{d}x_1\dots x_4$};
        \begin{scope}[every edge/.style={draw=black,thick}]
        	\path [-] (A1) edge   (A12);
        	\path [-] (A23) edge  (A3);
        	\path [-] (A3) edge   (A4);
        	\path [-] (A1) edge  (B1);
        	\path [-] (D2) edge (B2);
        	\path [-] (A3) edge (B3);
        	\path [-] (A4) edge (B4);
        	
        \end{scope}
    \end{tikzpicture}
    \caption{Graphical representation of ALS algorithm. In this linear sub-problem the second component tensor is being optimized, while component tensors $u_1,u_3,u_4$ are fixed (from previous iterations).}
    \label{fig:ALS}
\end{figure}
In our version of the ALS algorithm we update the component tensors from left to right.
We say that after every component tensor is updated once, one sweep is complete.
In order to circumvent overfitting, we further add a penalization term to the loss functional \eqref{eq:algo_regression}, such that the regression problem becomes
\begin{equation}\label{eq:regularizer}
    \hat v = \argmin_{v \in \mathcal M} \frac{1}{J} \sum_{j = 1}^J |v(x_j) - \tilde v(x_j) |^2 + \delta \| v \|_{H^2_{\text{mix}}(\Omega)},
\end{equation}
where $H^2_{\text{mix}}(\Omega)$ is the tensor product of one-dimensional $H^2$ Sobolev spaces \cite{sickel2009tensor}, assuming that $\Omega$ can be written as $\Omega = \bigotimes_{i = 1}^d [a, b]$, where $a < b$.
Choosing the one-dimensional ansatz functions to be orthonormal w.r.t. $H^2(a, b)$, this regularization term is realized via Parseval's identity, by penalizing the Frobenius norm of the component tensors.
As $H^2_{\text{mix}}(\Omega)$ is continuously embedded into $W^{1, \infty} (\Omega)$ \cite{sickel2009tensor} we are penalizing the $L^\infty(\Omega)$ norm of the gradient of $v$, which prevents overfitting.
Note that in order to reduce the impact of the regularizer we reduce $\delta$ within the iteration of the ALS.
After every completion of a sweep, we set $\delta$ to be the $10^{-3}$ times the residual of the loss functional \eqref{eq:regularizer}.
In our numerical tests we have noticed that this regularizer is integral to the success of our method.
\subsection{Complexity calculations for the evaluation and gradient of the function}
In the following we cover the evaluation of $v$ and the computation of the gradient of $v$ within the TT-format.
Note that for fast evaluation of the feedback law, efficient computation of the gradient is critical.
In the following complexity considerations we assume that $r_i = r$ for all $i$.

\textbf{Evaluating $v$ in the TT format.}
First, we consider the mapping $x \mapsto v(x)$ under the assumption that $v$ is in TT-format.
This operation can be denoted using the $\circ$ operation as
\begin{align}
    v(x_1, \dots, x_d) &= u_1 \circ \dots \circ u_d \circ \phi(x_d) \circ \dots \circ \phi(x_1). \label{eq:evaluate_tt}
\end{align}
We notice that in order to evaluate $v$ at a certain point $x \in \mathbb R^d$, we have to compute $\phi(x_i)$ for all $i$.
This corresponds to $m \cdot d$ evaluations of one-dimensional functions, where we assume that these evaluations are $\mathcal O(1)$ each.
The contraction is performed from left-to-right (or from right-to-left) and is of complexity $\mathcal O(d m r^2)$, because for every component tensor of dimension $r \times m \times r$ one vector of size $r$ and one vector of size $m$ has to be contracted, leaving a vector of size $r$.

\textbf{Evaluating $\nabla v$ in the TT format.}
In order to evaluate $\nabla v$ at a certain point $x \in \mathbb R^d$, we have to compute $\phi (x_i)$ and $\phi'(x_i)$ for all $i$, where $\phi'(x_i) \in \mathbb R^m$ is the derivative of the one-dimensional functions $\phi_i$ stacked into a vector.
Next we observe that because of the tensor structure of our basis, the partial derivative $\frac{\partial v}{\partial x_i}$ is given by
\begin{multline}
    \frac{\partial v}{\partial x_i}(x_1, \dots, x_d) =  \\
    u_1 \circ \dots \circ u_d \circ \phi(x_d) \circ \dots \circ \phi(x_{i +1}) \circ \phi'(x_{i}) \circ \phi(x_{i-1}) \circ \dots \circ \phi(x_1).
\end{multline}
Noticing the similarity of the above formulas with \eqref{eq:evaluate_tt} we can again estimate the complexity of computing a partial derivative to be similar to the previous case.
A naive implementation of the gradient then yields another factor $d$ in the complexity estimation, obtaining a total complexity of $\mathcal O(d^2 m r^2)$.
However, in the TT-format the naive implementation computes many redundant contractions, which means that we can save some complexity here.
As shown in Appendix \ref{sec:fast_grad} it is possible to reduce the complexity to $\mathcal O(d m r^2)$ via a recurrent scheme.

\section{Comparing both approaches and possible improvements}\label{sect:compare}
In this section we compare both approaches and give rise to possible improvements/adaptions of the algorithms.

\subsubsection*{Comparison}
We notice that both algorithms share the same backwards iteration to approximate $v^*(t_l, \cdot)$ for all $t_l$.
Thus, we have to compare how the local optimal control problems are solved.

Within the policy iteration approach the value function is approximated in an iterative scheme, where for every iteration step trajectories of length $\tau_k$ have to be computed and then a linear equation has to be solved.
In contrast to that, for the open-loop approach, an optimal control problem has to be solved for every sample point.
After that a single linear equation has to be solved.
Consequently, generating the samples for the open-loop approach is more expensive, provided that the policy iteration does not need too many iterations, while solving the linear equation is more expensive in the policy iteration case due to the iteration scheme. In \cite{fackeldey2020approximative} first ideas to the formal scaling w.r.t. the spatial dimension are described for the Policy Iteration combined with Least-Squares methods in the context of stochastic exit time problems. As indicated there within, the complexity of deterministic systems is in a similar but reduced fashion since the stochastic behaviour does not need to be resolved.

For both approaches, we draw samples $x_i \in \Omega$ and keep the same samples during the complete iteration.
Within the backwards iteration we have to choose initial data for both approaches.
In the policy iteration approach, an initial policy has to be chosen.
Due to the short time-horizon of the local optimization problems it is in most cases possible to choose the $0$ policy.
However, in many cases choosing the policy from the previous step is valid as well, i.e. setting $v_0(t, \cdot) = \hat v(t_{l+1}, \cdot)$, $t \in [t_l, t_{l+1})$ and obtaining $\alpha_0$ using the optimality condition from Theorem \ref{thm:optimality_condition}.
In our numerical tests we use the latter approach.

For the open-loop approach we do not need an initial policy.
Instead, we have to choose initial controls for the open-loop solver.
Here, it is again possible to choose the $0$ control.
However, using the data that was generated in the previous time step can increase the convergence rate tremendously.
More exactly, we denote by $u_{i,l}$ the initial guess, to emphasize its dependency on the sample $x_i$ and on the initial time $t_l$.
We use use the control that was computed in the previous time step, i.e. $u_{i,l} = u^*_{i, l+1}$, which is close to optimal if the difference between the final conditions is small.
After optimizing $u_{i,l}$ we denote the optimal control by $u^*(i, l)$.

For both approaches we notice that generating the samples can be parallelized perfectly.
In the case of the policy iteration approach we have to solve an ODE on a short time frame, whereas in the open-loop approach an optimal control problem has to be solved, which involves a gradient descent scheme where several forward and backward ODEs have to be solved.

\subsubsection*{Possible improvements}
The first improvement we propose is based on the error propagation from Theorem \ref{thm:error_propagation} and the Bellman equation.
We first observe that the Bellman equation does not only hold for initial time $t_l$ and final time $t_{l+1}$, but instead for every final time larger than $t_l$.
Assuming that we obtain the same error $\delta$ for the regression w.r.t. every end-point larger than $t_l$, larger time-horizon decreases the error propagation.
By setting the end-point to be $t_{l+2}$, the error bound from Theorem \ref{thm:error_propagation} is halved.
Setting the final time to be $T$ for every initial time $t_l$ prevents any error propagation.
Of course, solving the open-loop control problems with longer time-horizon becomes increasingly difficult.
Here, a balance between time-horizon and error propagation has to be found.

In a similar way it is possible to increase the time horizon for the policy iteration approach.
This is done by integrating along longer trajectories as indicated in Figure \ref{fig:horizon}
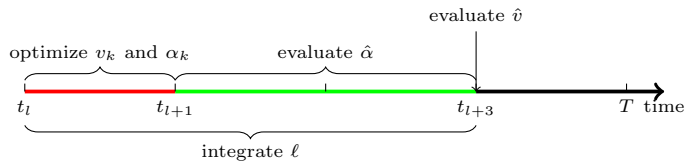
\begin{figure}[h]
    \centering
\begin{tikzpicture}[%
    every node/.style={
        font=\scriptsize,
        text height=1ex,
        text depth=.25ex,
    },
]
\draw[red, line width = 0.5mm] (0,0) -- (2,0);
\draw[green, line width = 0.5mm] (2,0) -- (6,0);
\draw[->, line width = 0.5mm] (6,0) -- (8.5,0);
\draw[->] (6,.8) -- (6,0);
\node at (6,1) {evaluate $\hat v$};

\foreach \x in {0,2,...,8}{
    \draw (\x cm,3pt) -- (\x cm,0pt);
}

\node[anchor=north] at (0,0) {$t_l$};
\node[anchor=north] at (2,0) {$t_{l+1}$};
\node[anchor=north] at (6,0) {$t_{l+3}$};
\node[anchor=north] at (8,0) {$T$};
\node[anchor=north] at (8.5,0) {time};

\draw[decorate,decoration={brace,amplitude=5pt}] (0,0.15) -- (2,0.15)
    node[anchor=south,midway,above=4pt] {optimize $v_k$ and $\alpha_k$};
\draw[decorate,decoration={brace,amplitude=5pt}] (2,0.15) -- (6,0.15)
    node[anchor=north,midway,above=4pt] {evaluate $\hat \alpha$};
\draw[decorate,decoration={brace,amplitude=5pt}] (6,-0.45) -- (0,-0.45)
    node[anchor=north,midway,below=4pt] {integrate $\ell$};
\end{tikzpicture}
\caption{Visualization of increased integration horizons. The red part is optimized via the policy iteration, the green part is only evaluated and the evaluate or black part is estimated by evaluating $\hat v$ at time $t_{l+3}$ . Note that $\hat \alpha$ and $\hat v$ are already computed and thus fixed.}\label{fig:horizon}
\end{figure}

The second proposed improvement can only be used by the open-loop approach.
The idea is using the optimal controls $u_{i,l}^*$ that we compute via the gradient descent method.
Due to Theorem \ref{thm:optimality_condition}, we know that 
\begin{equation}
    u^*_{i, l}(t_l) = - \frac 1 2 R^{-1} g(t, x_i)' \nabla v^*(t_l, x_i).
\end{equation}
This additional information can be incorporated to a modified regression problem, where we add a quadratic loss term containing the control
\begin{equation}\label{eq:regularizer2}
    \hat v_l = \argmin_{v \in \mathcal M} \frac{1}{L} \sum_{l = 1}^L |v(x_l) - \tilde v(x_l) |^2 + 
    \eta |u^*_{i, l}(t_l) + \frac 1 2 R^{-1} g(t, x_i)' \nabla v(x_i)|^2 +
    \delta \| v \|_{H^2_{\text{mix}}(\Omega)},
\end{equation}
where $\eta \geq 0$.
Adding the loss w.r.t. to the gradient of $v$  is closely related to the so-called \emph{physical informed neural network} (PINN) approach known in deep learning \cite{RAISSI2019686} and we also want to highlight that in \cite{azmi2020optimal} a similar adaption of the loss functional is done, where improved performance is reported.

Finally, another improvement is again motivated by MPC.
Instead of computing the feedback law by using the optimality condition in Theorem \ref{thm:optimality_condition} we can compute it by an MPC approach using the value function as final condition.
Depending on the length of the time horizon this can be done in real-time, yielding an optimal feedback law.
Due to the error propagation (backwards in time) it can be expected that the error in the value function is lower at later time steps.
Moreover, the gradient of the value function does not appear directly in the feedback law, but only indirect in the adjoint method within the MPC method.

A possible adaption of the open-loop algorithm is to decouple the value function and the controller.
In the open-loop approach it is possible to solve separate regression problems for approximating $\tilde v$ and $u^*$.
As the information on $u^*$ is a byproduct of the open-loop ansatz this does not increase the complexity of generating the data.
However, instead of one regression problem for the value function, two regression problems have to be solved with one being the value function and the other being the controller.

\section{Numerical Results}
We present results of numerical tests for different optimal control problems. For the implementation of the tensor networks we use the library \code{xerus} \cite{xerus}. 
The calculations were performed on a AMD Ryzen 5 PRO 3500U 8x 2.60GHz, 16 GB RAM Fedora 33 Linux distribution and the code is available on \url{https://github.com/lsallandt/finitehorizon_bellman}.
In every test we consider a cost functional of the form
\begin{equation}\label{eq:num_cost}
    \argmin_{u \in L^2((0, \infty); \mathbb R^m)} \mathcal{J}(x, u) = \int_{0}^{T} \| y(t) \|^2 + 0.1 \|u(t)\|^2\ + c \| y(T) \|^2 dt,
\end{equation}
where $c \geq 0$ and a PDE
\[ \dot y = f(y) + g(y) u, \quad y \in L^2(-1,1). \]
As the first step we discretize the PDE in space, such that we obtain a finite dimensional system of ODEs, which we also denote as
\[ \dot y = f(y) + g(y) u, \quad y \in \mathbb R^n. \]
For this discretization we use simple finite differences methods.
We implement our algorithms for the spatially discretized PDE.
As polynomial ansatz spaces we use the tensor product of one-dimensional $H^2$-orthogonal polynomials of degree smaller than $4$ and we use the loss functional with regularizer \eqref{eq:regularizer}.
We benchmark the controllers obtained by our optimization by comparing it to the optimal open-loop control over the whole time-horizon $[0, T]$.
We compute this optimal control by using the same gradient descent method used for the local optimal control problems.
In fact, we use a simple fixed step-size and for the update direction we use the current gradient and the gradient of the previous step.
In particular for Test 1 finding the optimal control over the whole time horizon without a good initial control is non-trivial, and computing the first gradient fails, due to the instability of the system. 
Thus, we use the control generated by our feedback controllers as initial controls.
This problem did not occur for the local optimal control problems due to their short time horizon.
We further compare our controllers to the linear quadratic regulator (LQR), a closed-loop controller that is obtained by linearizing the systems around $0$ and then solving the Riccati equation.
We denote this controller by $\alpha_{\text{LQR}}$.
In the numerical tests we discretize the time-dependency of the value function using $\tau = \tau_l = t_{l+1} - t_l = 0.01$.
The ODE is discretized using a different step-size, namely $0.001$ and the explicit Runge-Kutta $4$ method, which means that for every step in the value function, $10$ steps of the ODE are computed.
Here, we use linear interpolation of the value function to obtain the value function at the steps between our discretization points $t_l$.
We state that this uncoupling is integral for the numerical success of our method.
Setting $\tau = \tau_l = 0.001$ we obtain worse results.
This effect can be attributed to the error propagation from Theorem \ref{thm:error_propagation}.
\begin{remark}\label{rem:pictures}
In the following tests, we distinguish between the policy $\alpha$, the corresponding cost estimator $v$ and the real generated cost $\mathcal J(\cdot, \alpha(\cdot))$. For fixed $x$, we obtain $v(x)$ by simply evaluating $v$. Here, no trajectory has to be computed. We obtain $\mathcal J(x, \alpha(x))$ by numerically integrating along the trajectory with initial condition $x$. Note that $\mathcal J(x, \alpha(x))$ is basically the numerical approximation of the cost functional with respect to a feedback law, defined in \eqref{eq:feedback_cost}.
\end{remark}

\subsection{Test 1:  Diffusion with Unstable Reaction Term}
We consider a diffusion equation with unstable reaction term and Neumann boundary condition, c.f. \cite[Test 2]{pol_approx_kunisch}. Solve \eqref{eq:num_cost} with $c = 1$ and $T = 0.3$ for $y \in L^2(-1,1)$ subject to
\begin{align*}
\dot y &= \sigma \Delta y + y^3 + \chi_\omega u \\
y(0) &= x \\
\end{align*}
with Neumann boundary condition and $\chi_\omega$ is the characteristic function w.r.t. $\omega = [-0.4, 0.4] \subset [-1,1]$. We choose $\sigma = 1$ and use a finite differences grid with $d \in \mathbb N$ grid points to discretize the spatial domain. We denote by $A$ this finite difference discritization of the Laplace operator and by $G\in \{0,1\}^d$ the discritization of the characteristic function $\chi_\omega$. Then we obtain a system of $d$ ordinary differential equations 
\begin{align*}
    &\dot y = \sigma A y + y^3 + Gu\\
    & y(0)=x
\end{align*}
Using the step-size $h = \frac{1}{d+1}$ we get a finite dimensional approximation of the term $\| y(t) \|_H^2$ in the cost functional. For this test we choose a spatial dimension of $d=32$. As the underlying equation is non linear, our ansatz for the value function is the tensor product of polynomials up to degree $4$. The internal ranks chosen are
\[ [3, 4, 5, 5, 5, 6, 6, 6, 6, 7, 7, 7, 7, 7, 7, 7, 7, 7, 7, 6, 6, 6, 6, 6, 6, 6, 5, 5, 5, 4, 3]. \]
We solve the HJB equation in 32 dimensions on the set $[-2,2]^d$. While the full ansatz space has dimension $5^{32}$, the TT has $5395$ degrees of freedom.
For the calculations we use $32370$ uniformly distributed Monte-Carlo samples.
\begin{remark}
We stress that the number of Monte-Carlo samples is extremely small when comparing it to the dimension of the ambient space \newline $32370 \ll 5^{32} \approx 10^{22}$ and only when comparing it to the degrees of freedom in the TT representation the numbers become comparable, $32370 = 6 \cdot 5395$.
This further indicates, that the curse of dimensionality is broken by our ansatz. Additional studies, where the minimal number of samples is compared to the dimensions of the underlying systems have to be done.
\end{remark}

We first test the feedback controllers for certain initial values, visualized in Figure \ref{fig:s_test_certain}. 
For both controllers, significant improvement in cost is noticeable, with the greatest being approximately $38\%$ of the cost saved compared to the LQR controller.
Moreover, we see that the computed feedback laws generate close to optimal costs for the tested initial values.
\begin{figure}[h]
\centering
\begin{subfigure}[c]{0.49\textwidth}
\includegraphics[width=1\textwidth]{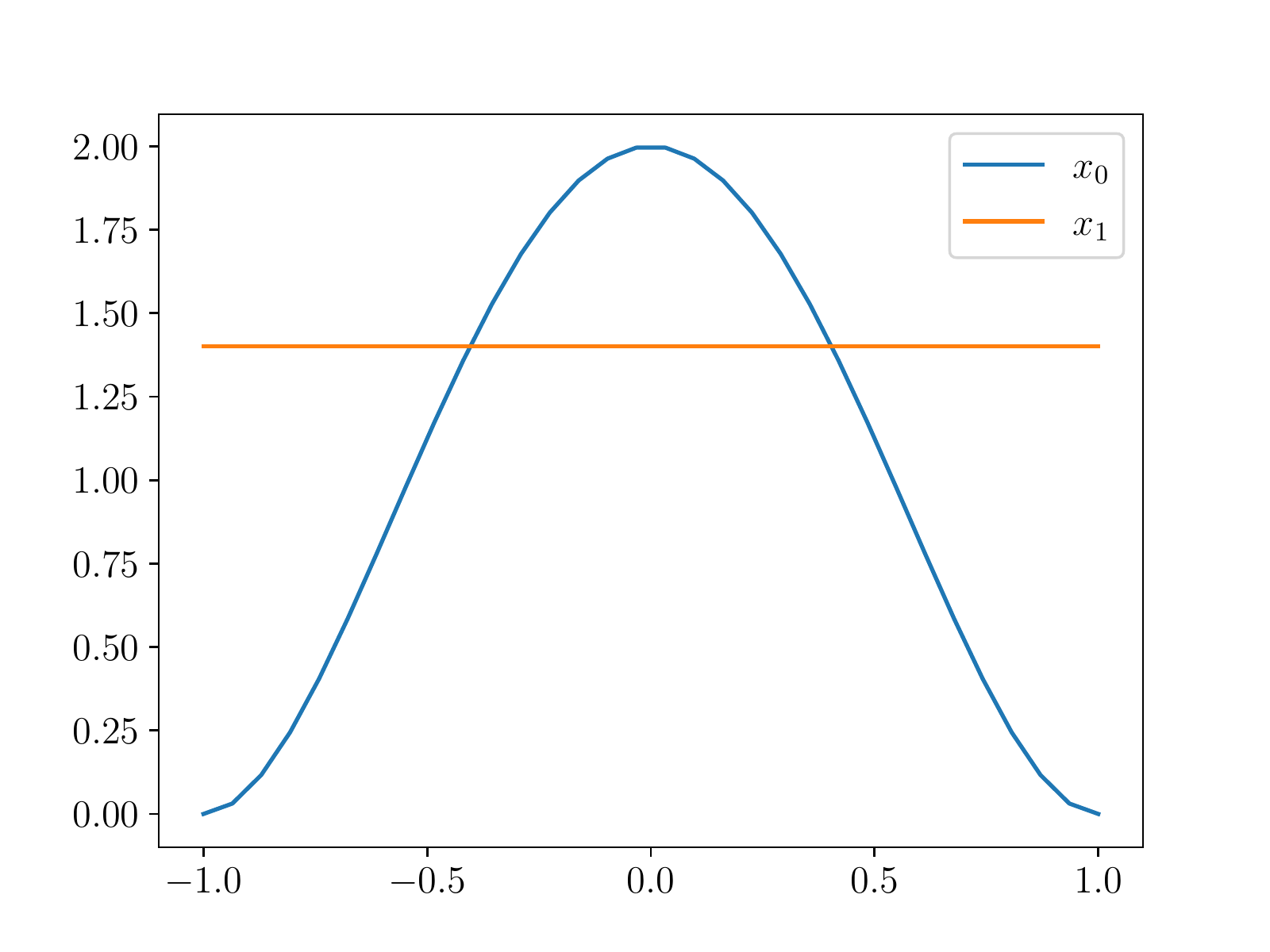}
\subcaption{Initial values $x_0$ and $x_1$.}
\end{subfigure}
\begin{subfigure}[c]{0.49\textwidth}
\includegraphics[width=1\textwidth]{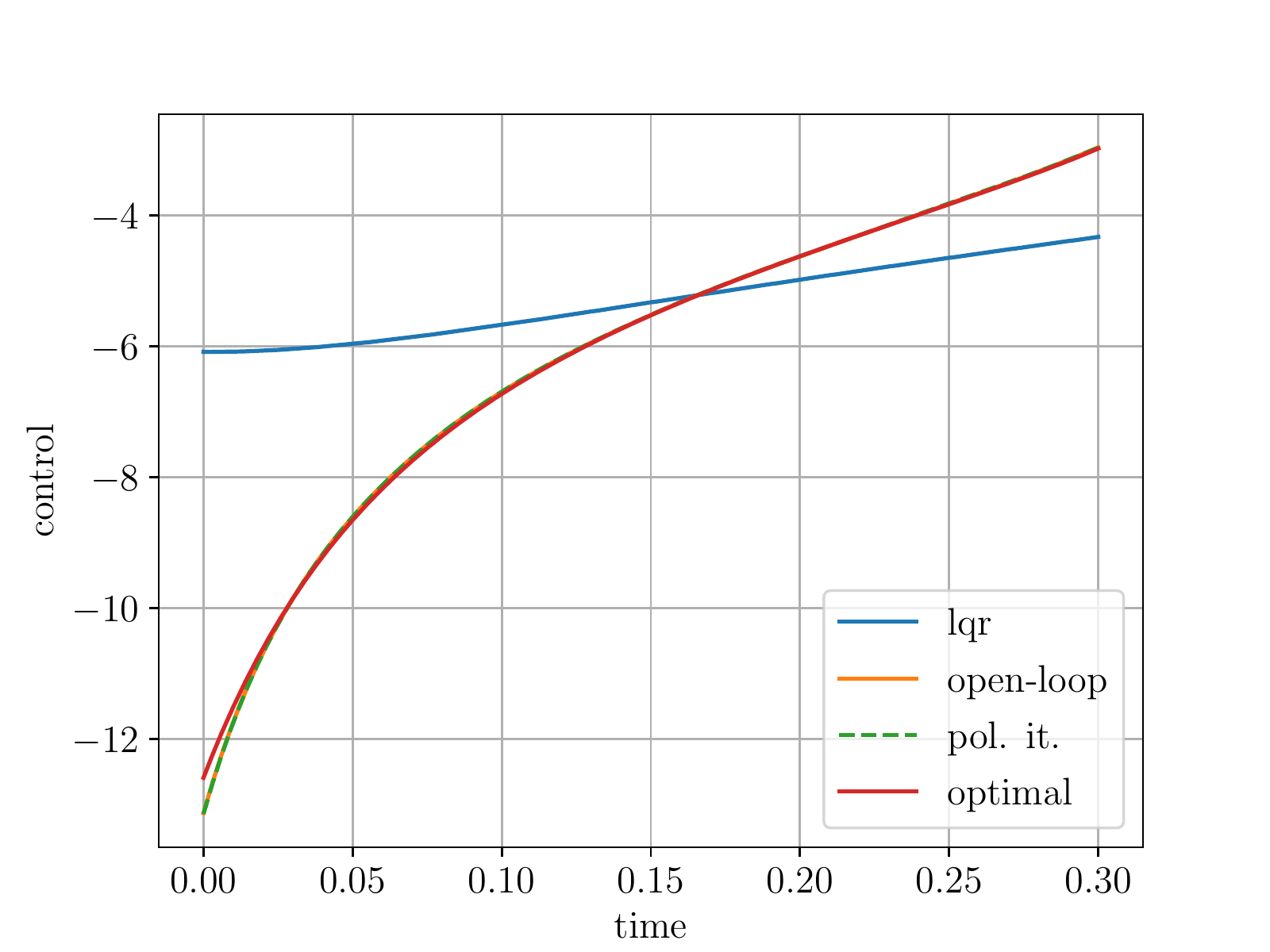}
\subcaption{Generated controls, initial value $x_0$.}
\end{subfigure}
\hfill
\begin{subfigure}[c]{0.49\textwidth}
\includegraphics[width=1\textwidth]{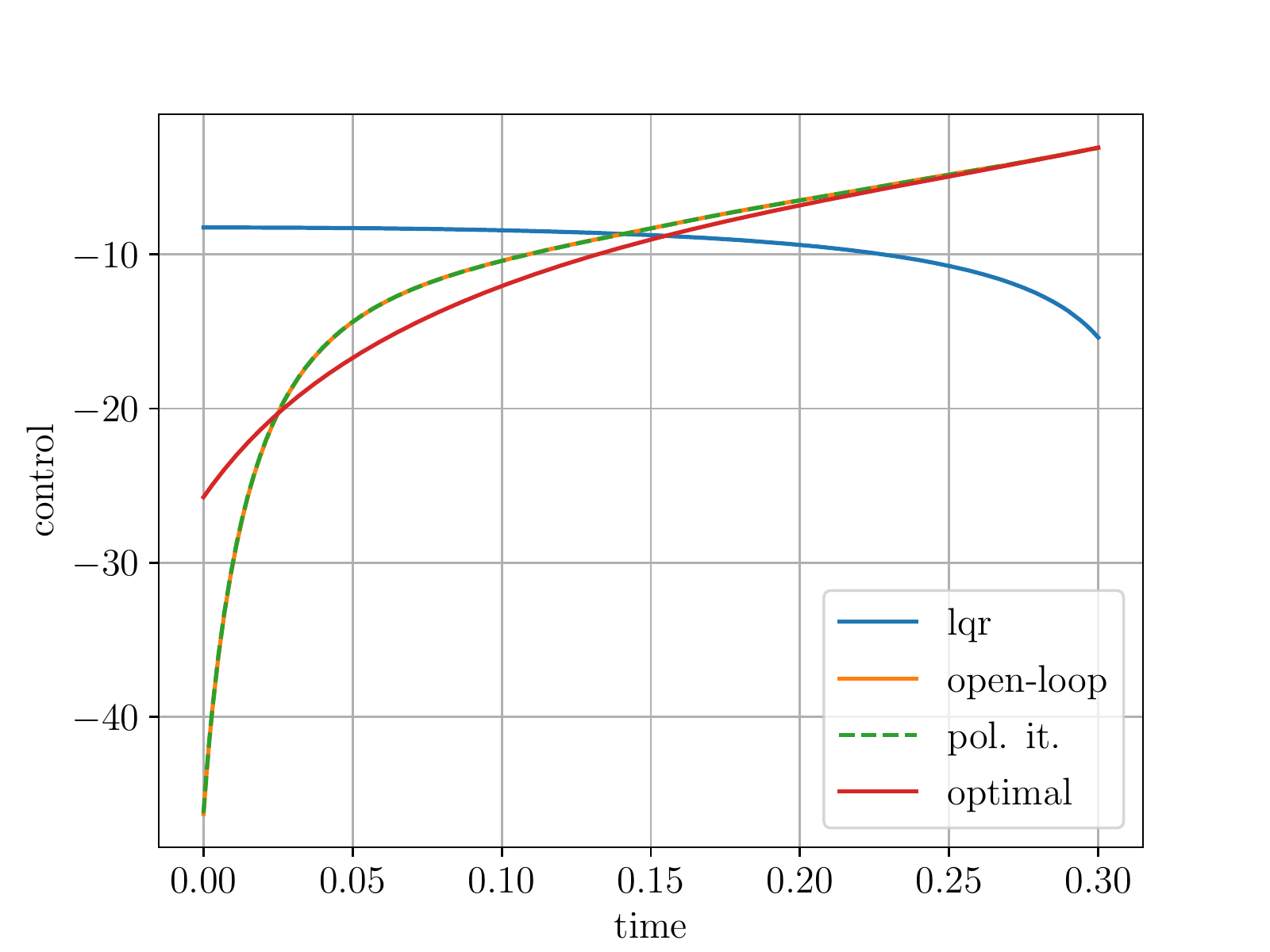}
\subcaption{Generated controls, initial value $x_1$.}
\end{subfigure}
\begin{subfigure}[c]{0.49\textwidth}
\includegraphics[width=1\textwidth]{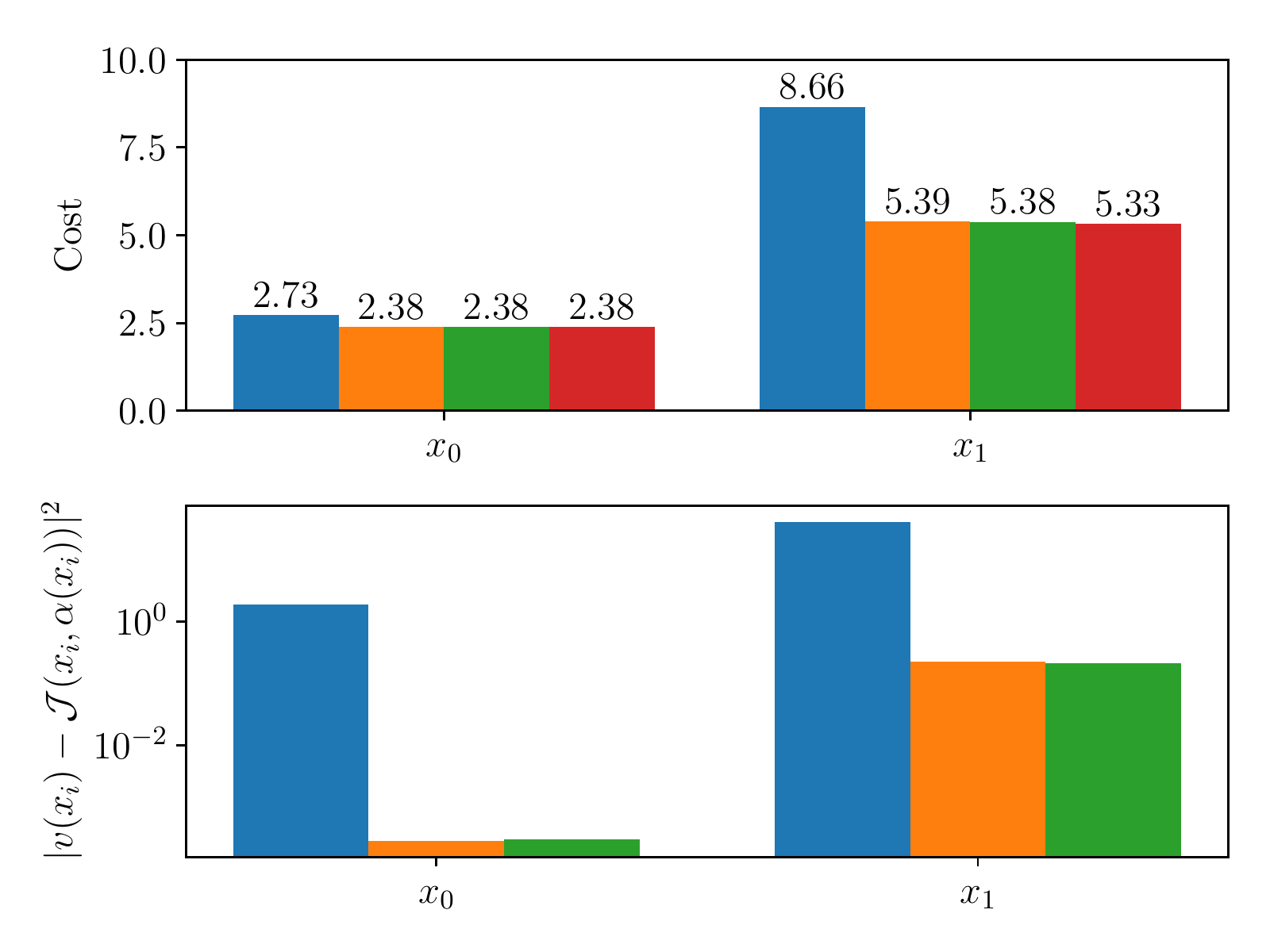}
\subcaption{Generated cost and accuracy of the value function. Blue ist the LQR controller, orange is the open-loop ansatz, green is the policy iteration ansatz, red is the optimal control.}
\end{subfigure}
\caption{The generated controls and cost for different initial values.}\label{fig:s_test_certain}
\end{figure}
We further investigate the performance of the controller by choosing initial values of the type $[x, x, \dots, x]$, where $x 
\in [0, 2]$.
In Figure \ref{fig:s_xxx_initial_values} we see that for small $x$ every controller is close to optimal.
For $x$ larger than $1$ the LQR controller performs significantly worse than the other controllers.
By increasing the initial values beyond $1.5$, the LQR controller fails to stabilize the system, while the our controllers still stabilize the system.
However, for such values the computed feedback laws  differs evidently  from the 
optimal control. 
For such large initial values the present setting was to coarse to achieve better accuracy.
Note that for computing the optimal control for such extreme initial values a good initial guess is needed.
In particular, due to the blow-up, it is not possible to use the control generated by the LQR controller as initial guess.
We were only able to find the optimal control by using the controls generated by our feedback controllers as initial guesses.
\begin{figure}[h]
\centering
\includegraphics[width=.5\textwidth]{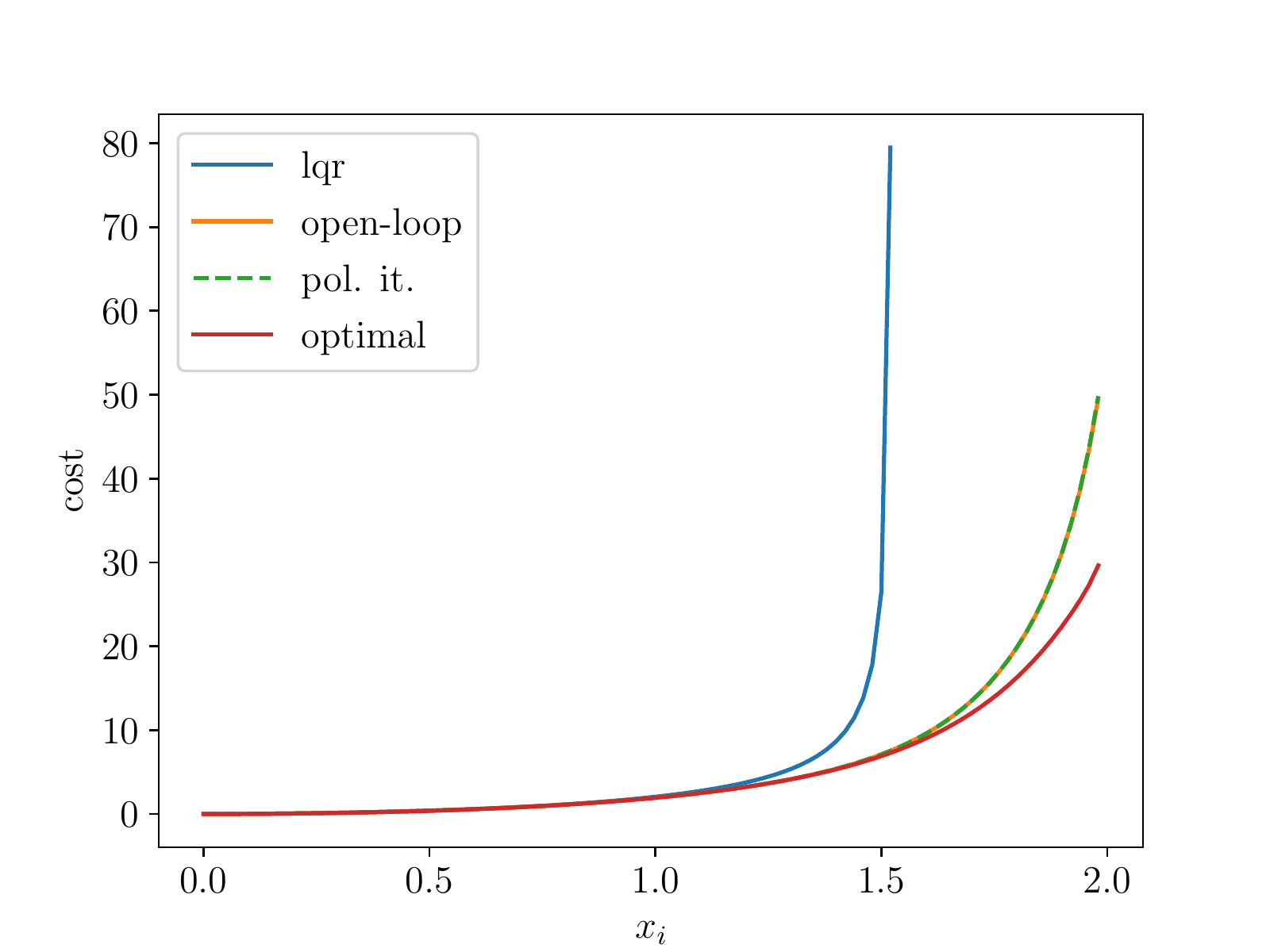}
\caption{Cost of initial values of the type $[x, x, \dots, x]$.}\label{fig:s_xxx_initial_values}
\end{figure}
Next we test the feedback law for random initial values. Note that because of the diffusion, equally distributed samples and normally distributed samples yield low cost on average and in this case no improvements of the cost is to be expected. Thus, we use a special distribution of initial values that we specify now. For every initial value we choose an equally distributed integer between $2$ and $20$. This number is the degree of a random polynomial. Next we choose a polynomial with normal distributed coefficients of the degree we chose. We further modify the polynomial in the following way $p(x) := \tilde p(x) (x-1)(x+1)$, such that we have $p(-1) = p(1) = 0$. Finally, we rescale $p$ such that its maximum in $[-1,1]$ is $1.9$. In order to have an idea how these initial values look, we plotted $10$ initial values in Figure \ref{fig:s_x0_random} and report the results in Table \ref{tab:s_random}.
We report that for these initial values the LQR controller failed to obtain cost smaller than $100$ in $65$ out of $1000$ initial values, while our controllers were not only stabilizing for every initial value, but also close to optimal.
On the set of initial values that the LQR controller did not fail we see an average improvement of $32 \%$ while being close to optimal.
Finally, we report that on the whole set of initial values, which means that these where the LQR controller failed are included, the average difference to the optimal control was $1\% $.
We note that out of all $1000$ samples, the largest relative difference between the optimal control and our feedback controllers is $\approx 34\%$, which was also observed in Figure \ref{fig:s_xxx_initial_values} for values close to the boundary of our integration area.
\begin{figure}[h]
\centering
\includegraphics[width=.5\textwidth]{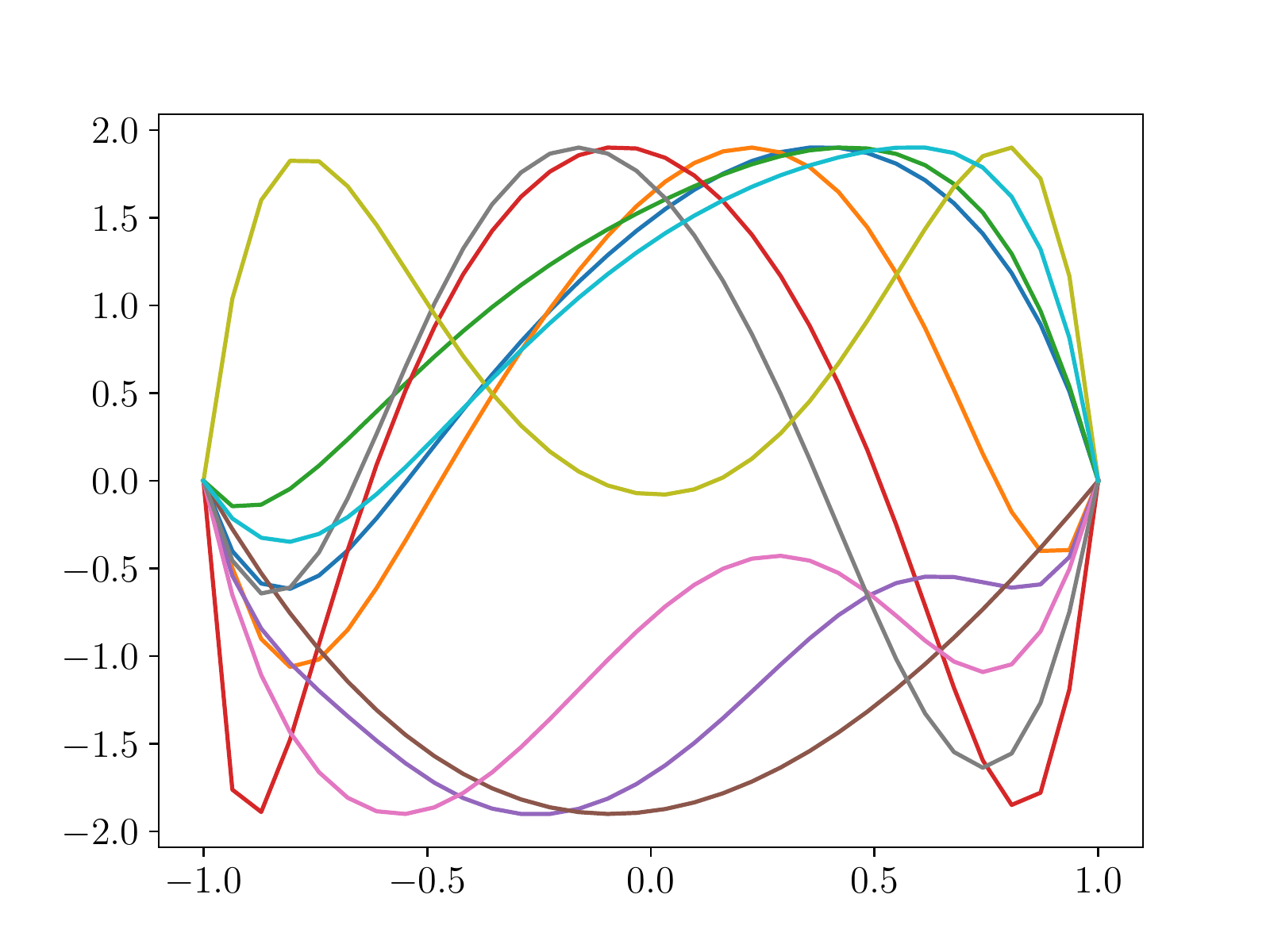}
\caption{Examples of  samples drawn from the polynomial distribution.}\label{fig:s_x0_random}
\end{figure}
\begin{table}[h]
\begin{center}
\begin{tabular}{ c | c c c c}
controller & \% cost $< 100$ & avg. cost & max. rel. diff. to opt. & avg. Bellman error \\
\hline
 LQR & $93.5$ & $4.453$ & $\text{nan}$ & $39.67$\\ 
 open-loop & $100$ & $2.61$ & $0.3419$ & $0.048$\\  
 pol. it. & $100$ & $2.61$ & $0.3381$  & $0.047$ \\  
 optimal & $100$ & $2.60$ &  $0$ &   
\end{tabular}
\end{center}
\caption{Performance of the different controllers for $1000$ samples drawn from the polynomial distribution. The averaged values are only taken from the subset of initial values that the LQR succeeded in stabilizing.}\label{tab:s_random}
\end{table}
\FloatBarrier
\subsection{Test 2: Allen-Kahn equation}\label{subsec:burgers}
As underlying equation we use a one-dimensional Allen-Kahn equation similar to \cite[Test 3]{pol_approx_kunisch}. Solve \eqref{eq:num_cost} for $y \in L^2(-1,1)$ subject to
\begin{align*}
\dot y &= \sigma \Delta y + y - y^3 + \chi_\omega u \\
y(0) &= x
\end{align*}
with Neumann boundary condition. Here, we use the same discretization as in Section \ref{subsec:burgers}. The constants are the same except for $\sigma = 0.2,\ \omega = [-0.5, 0.2]$. 
Again, an ansatz of polynomials up to degree $4$ is used.
We choose the same ranks as in the last test
\[ [3, 4, 5, 5, 5, 6, 6, 6, 6, 7, 7, 7, 7, 7, 7, 7, 7, 7, 7, 6, 6, 6, 6, 6, 6, 6, 5, 5, 5, 4, 3] \]
and solve the HJB on $[-2,2]^{32}$.

From Figure \ref{fig:a_test_certain} we deduce that for certain initial values, significant improvement of cost is possible for both controllers with the greatest improvement being $25\%$ of the cost.
Again, the open-loop and the policy iteration approach yield similar performance.
We again notice that for these initial values our calculated value functions are more accurate than the predictions from the LQR-based value function.
\begin{figure}[htb]
\centering
\begin{subfigure}[c]{0.49\textwidth}
\includegraphics[width=1\textwidth]{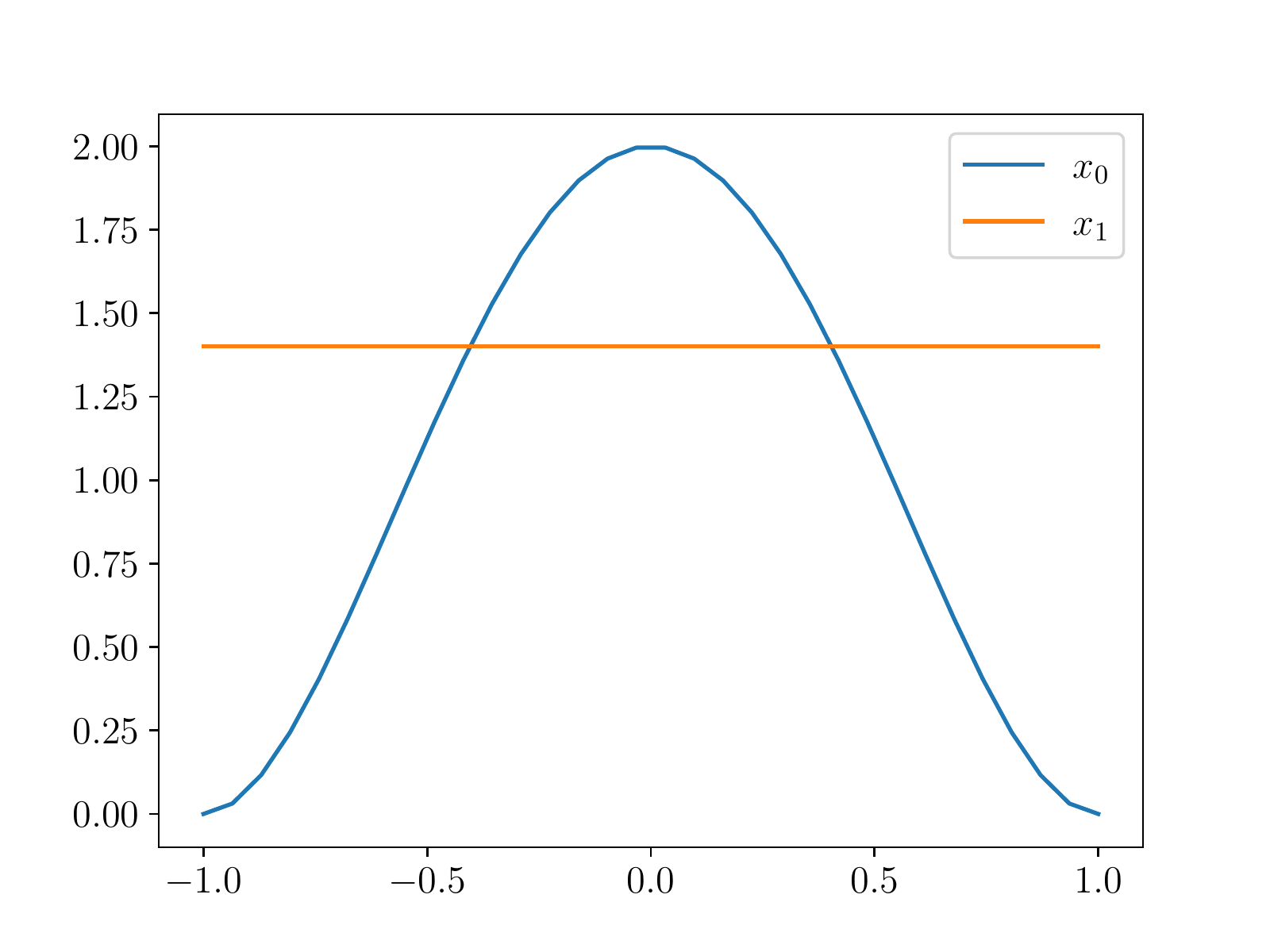}
\subcaption{Initial values $x_0$ and $x_1$.}
\end{subfigure}
\begin{subfigure}[c]{0.49\textwidth}
\includegraphics[width=1\textwidth]{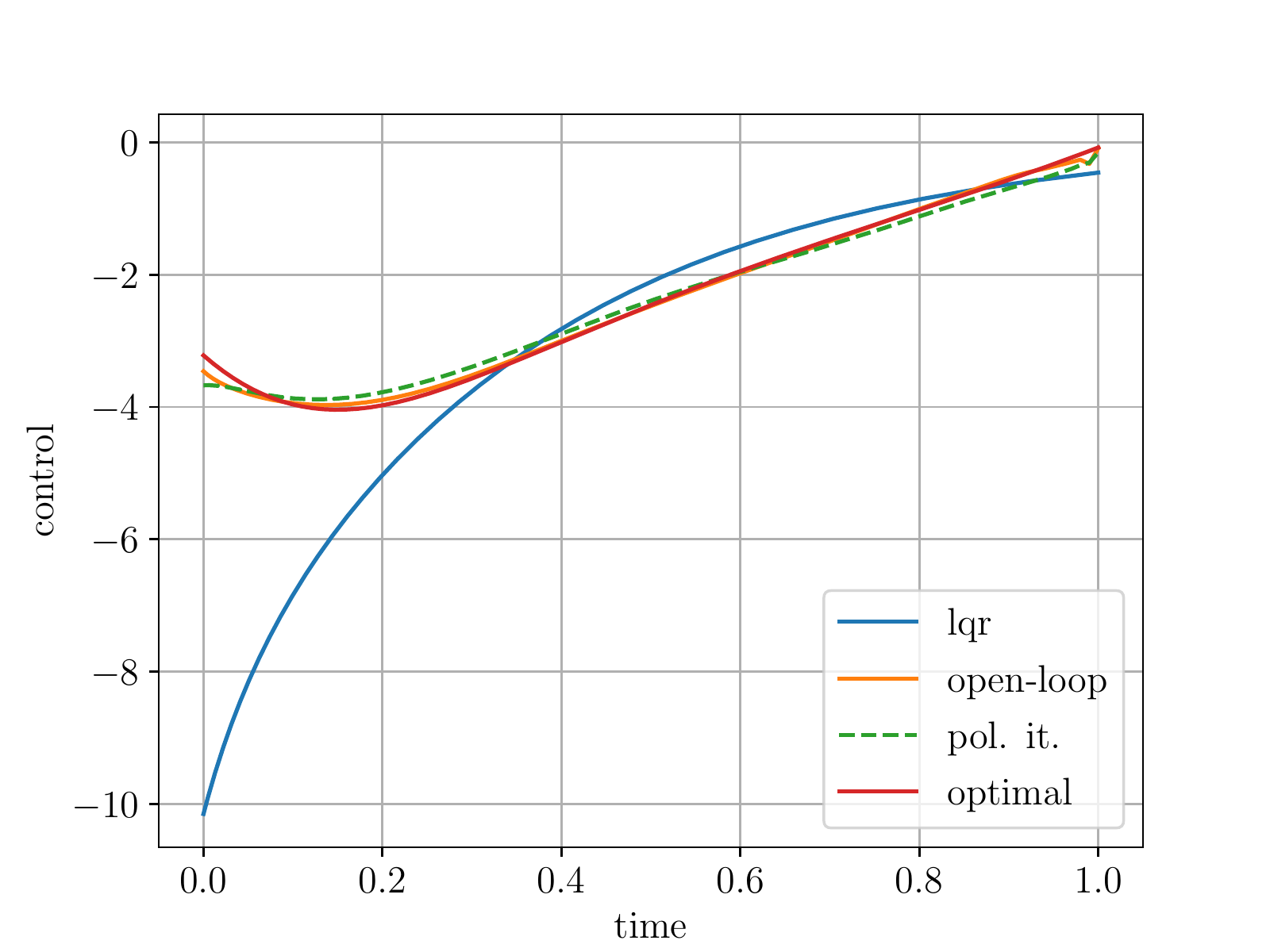}
\subcaption{Generated controls, initial value $x_0$.}
\end{subfigure}
\hfill
\begin{subfigure}[c]{0.49\textwidth}
\includegraphics[width=1\textwidth]{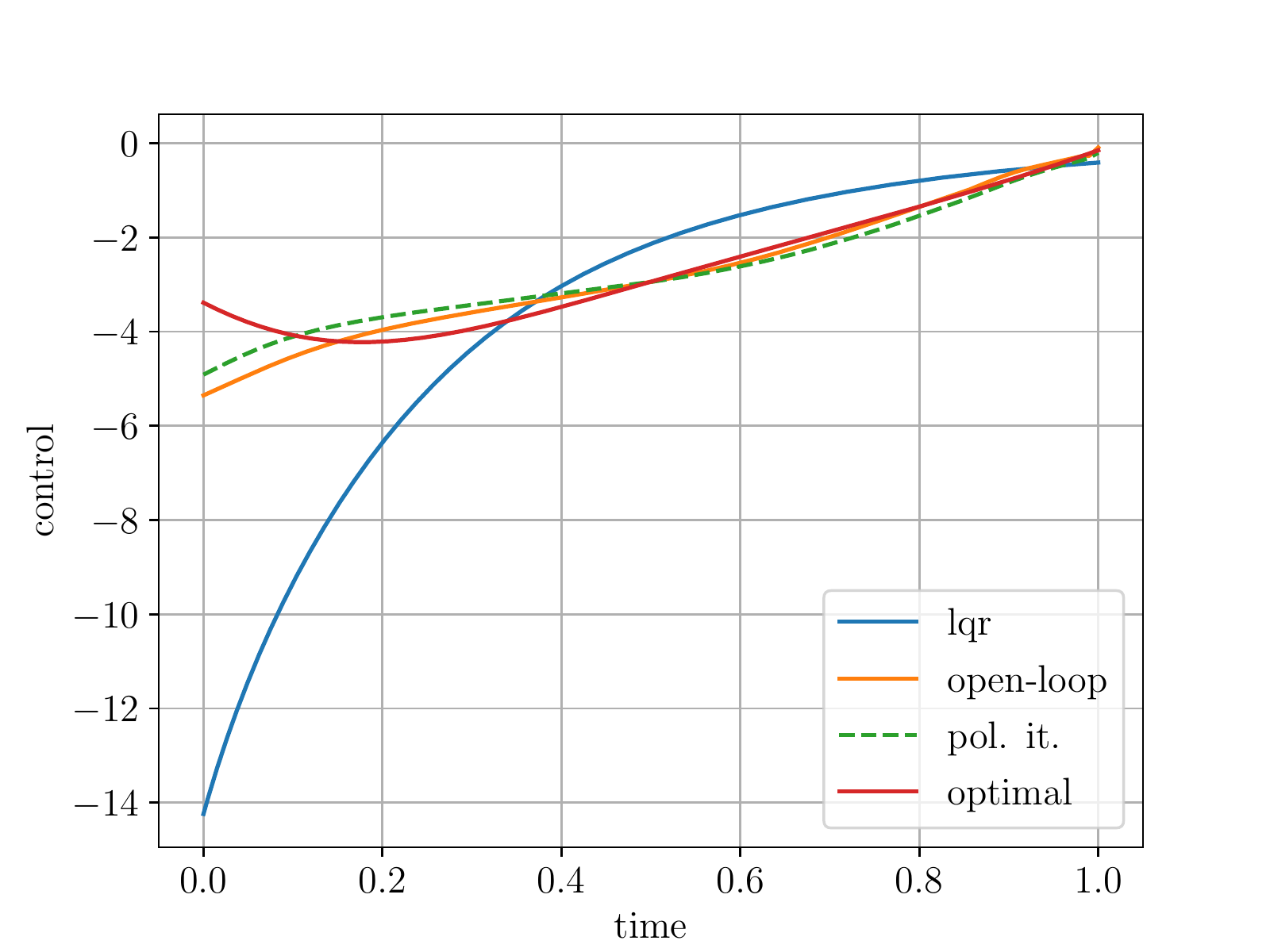}
\subcaption{Generated controls, initial value $x_1$.}
\end{subfigure}
\begin{subfigure}[c]{0.49\textwidth}
\includegraphics[width=1\textwidth]{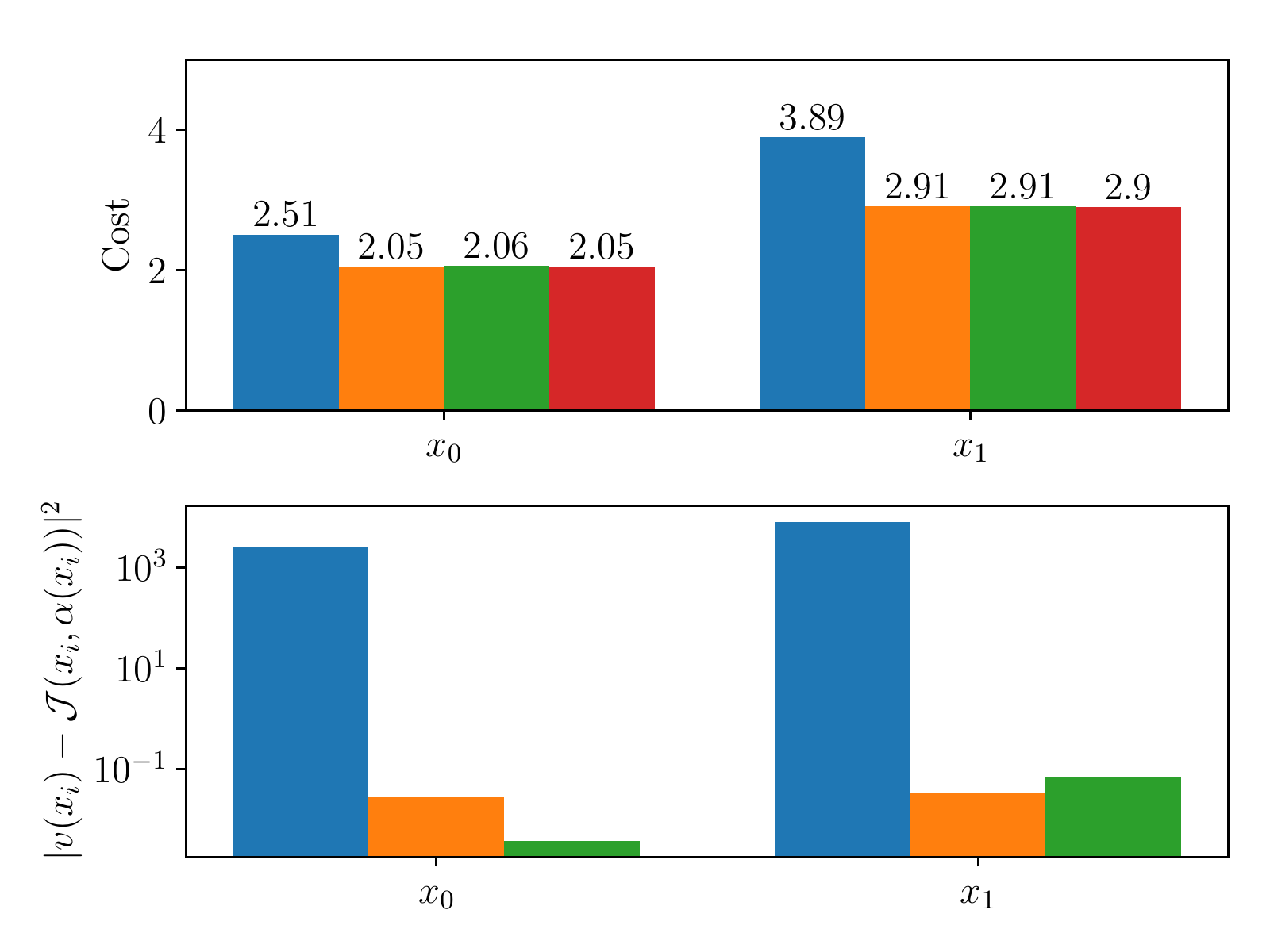}
\subcaption{Generated cost and accuracy of the value function. Blue ist the LQR controller, orange is the open-loop ansatz, green is the policy iteration ansatz, red is the optimal control.}
\end{subfigure}
\caption{The generated controls and cost for different initial values.}\label{fig:a_test_certain}
\end{figure}

We again further investigate the performance of the controller by choosing initial values of the type $[x, x, \dots, x]$, where $x 
\in [0, 2]$.
In Figure \ref{fig:a_xxx_initial_values} we see that for small $x$ every controller is close to optimal.
For $x$ larger than $0.5$ the LQR controller yields significantly worse performance than the other controllers.
However, in this case the LQR controller stabilizes the system for every initial value.
\begin{figure}[htpb]
\centering
\includegraphics[width=.5\textwidth]{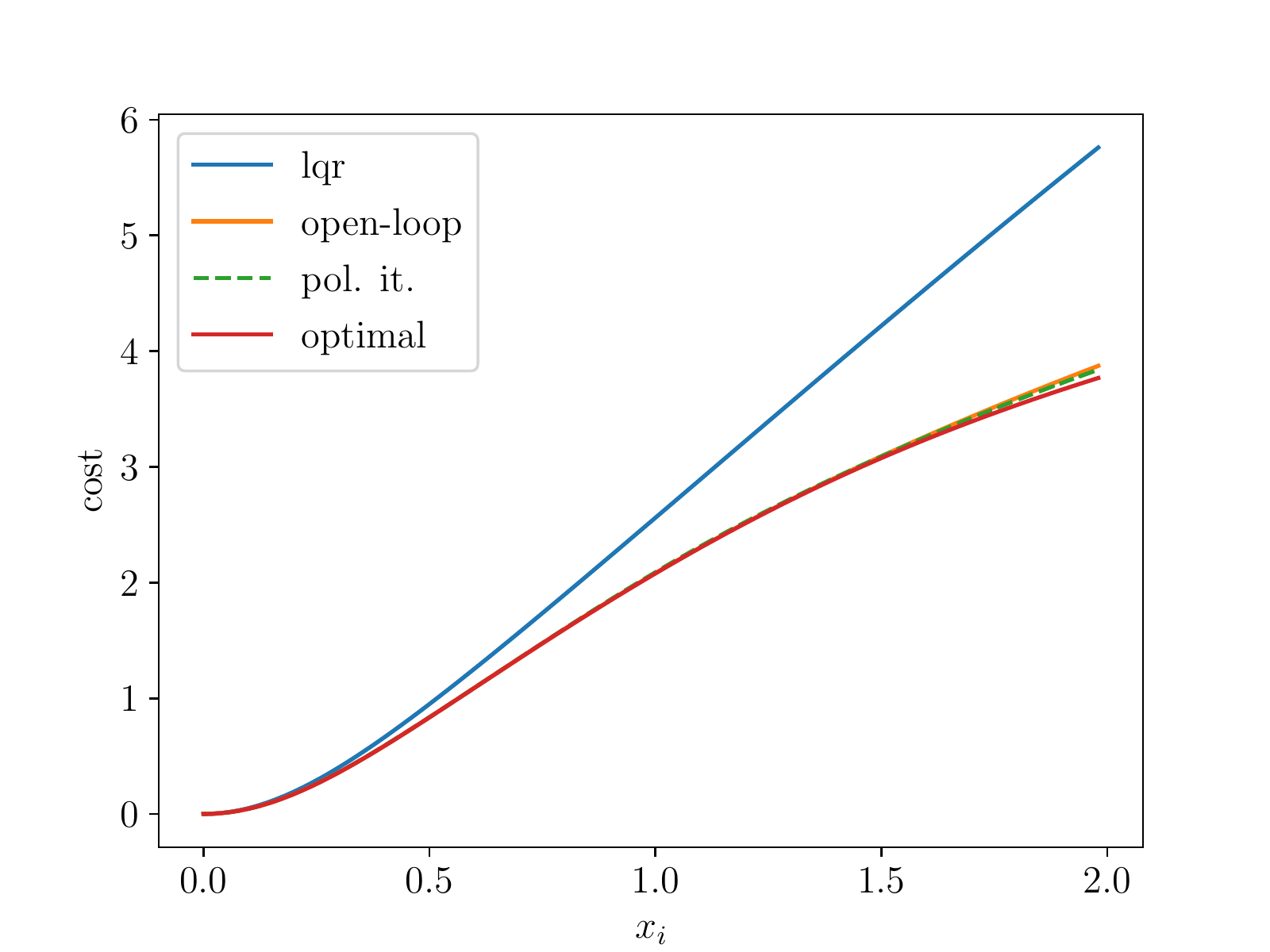}
\caption{Cost of initial values of the type $[x, x, \dots, x]$.}\label{fig:a_xxx_initial_values}
\end{figure}

Next we again test random initial values using the same setup as in the previous test.
In Table \ref{tab:a_random} we compare the performance of the controllers for $1000$ random initial values drawn from the polynomial distribution.
Again, the open-loop and the policy iteration approach yield close to optimal performance, while the LQR generates $22 \%$ more cost.
Moreover, the the open-loop and the policy iteration approaches predict the generated cost accurately.
\begin{table}[htpb]
\centering
\begin{center}
\begin{tabular}{ c | c c c c}
controller & \% cost $< 100$ & avg. cost &  max. rel. diff. to opt. & avg. Bellman error \\
\hline
 LQR & $100$ & $2.434$ & $0.4234$ &  $24172$\\ 
 open-loop & $100$ & $1.985$ & $0.0146$ & $0.032 67$\\  
 pol. it. & $100$ & $1.988$  & $0.0152$ & $0.030593$ \\  
 optimal & $100$ & $1.981$  & $0$  &  
\end{tabular}
\end{center}
\caption{Performance of the different controllers for $1000$ samples drawn from the polynomial distribution. The averaged values are only taken from the subset of initial values that the LQR succeeded in stabilizing.}\label{tab:a_random}
\end{table}

\section*{Conclusion}
We have compared two methods for finding optimal controllers for finite horizon optimal control problems.
In numerical tests we have observed similar performance for both methods.
In contrast to the linear quadratic regulator we have obtained close to optimal costs for many initial states.
Moreover, in none of the tests blow-ups occurred for our controllers.
By encoding an approximation of the value function in a low-rank tensor model we have obtained a low-fidelity representation that allows fast evaluation of the feedback law.
If even higher accuracy is needed, it is possible to take the control generated by our low-fidelity model as initial guess for further improvements via an open-loop optimization, where the control generated by the LQR controller fails.
%

\section*{Acknowledgements}
Leon Sallandt and Mathias Oster acknowledge support from the Research Training Group ”Differential Equation- and Data-driven Models in Life Sciences and Fluid Dynamics: An Interdisciplinary Research Training Group (DAEDALUS)” (GRK 2433) funded by the German Research Foundation (DFG).
\printbibliography

 \appendix
 
\section{Proof}
\begin{proof}[proof of Lemma \ref{lem:v_cont_wrt_final}]
Towards a contradiction assume that there exists an initial value $x$ such that $| v_1(x) - v_2(x) | > \delta$. W.l.o.g. assume that $v_1(x) - v_2(x) > \delta$.
Due to the definition of $v_2$, for every $\varepsilon>0$ there is a control $u_2$ such that
$$\int_{t_0}^{t_1} \ell(y_2,u_2) dt + c_2(y_2(t_1))-\varepsilon\leq v_2(x),$$
where $y_2$ is the trajectory corresponding to $u_2$.
Plugging $u_2$ into the first cost functional yields
\begin{align*}
    v_1(x) < \int_{t_0}^{t_1} \ell(y, u_2) dt + c_1(y(t_1)) := \tilde v_1(x).
\end{align*}
Now it follows that
\begin{align*}
\delta < v_1(x) - v_2(x) \geq \tilde v_1(x) - v_2(x) &\leq \int_{t_0}^{t_1} \ell(y_2, u_2) dt  + c_1(y_2(t_1))   - (\int_{t_0}^{t_1} \ell(y_2, u_2) dt + c_2(y_2(t_1))  )+\varepsilon \\
&= c_1(y_2(t_1))  - c_2(y_2(t_1))+\varepsilon. 
\end{align*} 
Since this holds for every $\varepsilon>0$, this contradicts $\|c_1 - c_2 \|_\infty< \delta$.\\\\

\end{proof}

\section{Fast gradient evaluation}\label{sec:fast_grad}
We now show how to compute the gradient of a function in TT-representation in complexity $\mathcal O(d m r^2)$.
For this, we define the contractions
\begin{align}
    \psi_i^+(x_{i+1}, \dots, x_d) &= u_{i+1} \circ \dots \circ u_d \circ \phi(x_d) \dots \circ \phi(x_{i+1}) \\
    \psi_i^-(x_1, \dots, x_{i-1}) &= u_{1} \circ \dots \circ u_{i-1} \circ \phi(x_{i-1}) \dots \circ \phi(x_1).
\end{align}
Note that $\psi_i^+$ is the contraction of every component tensor with larger index than $i$, while $\psi_i^-$ is the contraction of every component tensor with smaller index than $i$.
We observe that $\psi_i^+(x_{i+1} \dots, x_d) \in \mathbb R^{r}$, $\psi_i^-(x_1 \dots, x_{i-1}) \in \mathbb R^{r}$ and that 
\begin{equation}\label{TT:eq:partial_derivative_v_recursive}
    \frac{\partial v}{\partial x_i}(x_1, \dots, x_d) = \psi_i^- \circ u_i \circ \psi_i^+ \circ \phi'(x_i).
\end{equation}
Finally, the recursive properties 
\begin{equation}\label{TT:eq:gradient_psi_recursive}
   \psi_i^+ = u_{i+1} \circ \psi_{i+1}^+ \circ \phi(x_{i+1}), \quad \psi_i^- = \phi(x_{i-1}) \circ (\psi_{i-1}^- \circ u_{i-1}) 
\end{equation}
yields Algorithm \ref{TT:algo:gradient}.

\begin{algorithm}[h]
\SetAlgoLined
\caption{Computing the gradient of a function $v$ in TT-format.}\label{TT:algo:gradient}
\SetKwInOut{Input}{input}\SetKwInOut{Output}{output}
\SetKwInOut{Output}{output}\SetKwInOut{Output}{output}
\Input{A function $v$ in TT-format, component tensor $u_1, \dots, u_d$, one-dimensional basis functions $\phi_1, \dots, \phi_n$, point $x = (x_1, \dots, x_d) \in \mathbb R^d$.}
\Output{The gradient $\nabla v(x)$}
\For{$\mu = 1, \dots, d-1$}{
    Calculate $\psi_i^-$ using the recursive formula \eqref{TT:eq:gradient_psi_recursive}.
}
\For{$\mu = d, \dots, 1$}{
    Calculate $\psi_i^+$ using the recursive formula \eqref{TT:eq:gradient_psi_recursive}.
}
\For{$\mu = d, \dots, 1$}{
    Calculate $\nabla v(x) [i] = \frac{\partial v}{\partial x_i}(x)$ by using \eqref{TT:eq:partial_derivative_v_recursive}.
}
\end{algorithm}
Note that the computational complexity of \eqref{TT:eq:partial_derivative_v_recursive} and \eqref{TT:eq:gradient_psi_recursive} is $\mathcal O(m r^2)$.
Thus, Algorithm \ref{TT:algo:gradient} has computational complexity of $\mathcal O(d m r^2)$.
This complexity compares to the complexity of the naive implementation $\mathcal O(d^2 m r^2)$.

\end{document}